\renewcommand{\epsilon}{\varepsilon}
\renewcommand{\setminus}{\smallsetminus}
\renewcommand{\emptyset}{\varnothing}
\newtheorem{theorem}{Theorem}[section]
\newtheorem{proposition}[theorem]{Proposition}
\newtheorem{corollary}[theorem]{Corollary}
\newtheorem{lemma}[theorem]{Lemma}
\theoremstyle{definition}
\newtheorem{example}[theorem]{Example}
\newtheorem{definition}[theorem]{Definition}
\newtheorem{notation}[theorem]{Notation}
\newtheorem{question}[theorem]{Question}
\newtheorem{remark}[theorem]{Remark}
\newcommand{\Q}{\mathbb Q}
\newcommand{\Z}{\mathbb Z}
\newcommand{\N}{\mathbb N}
\renewcommand{\L}{\mathcal L}
\newcommand{\F}{\operatorname{F}}
\newcommand{\FFF}{\operatorname{F}}
\newcommand{\FP}{\operatorname{FP}}
\newcommand{\cohom}[3]{H^{{\raise1pt\hbox{$\scriptstyle#1$}}}(#2\>\!,#3)}
\newcommand{\tatecohom}[3]%
  {\widehat H^{{\raise1pt\hbox{$\scriptstyle#1$}}}(#2\>\!,#3)}
\newcommand{\Cohom}[3]%
  {H^{{\raise1pt\hbox{$\scriptstyle#1$}}}\big(#2\>\!,#3\big)}
\newcommand{\Tatecohom}[3]%
  {\widehat H^{{\raise1pt\hbox{$\scriptstyle#1$}}}\big(#2\>\!,#3\big)}
\newcommand{\homol}[3]{H_{{\lower1pt\hbox{$\scriptstyle#1$}}}(#2\>\!,#3)}
\newcommand{\homolog}[2]{H_{{\lower1pt\hbox{$\scriptstyle#1$}}}(#2)}
\newcommand{\colim}{\varinjlim}
\renewcommand{\ker}{\operatorname{Ker}}
\newcommand{\mono}{\rightarrowtail}
\newcommand{\epi}{\twoheadrightarrow}
\newcommand{\downlink}{\operatorname{lk}\!\!\downarrow\!\!}
\DeclareMathOperator{\glb}{glb}
\title[Finiteness conditions in generalisations of Thompson's group $V$]{Cohomological finiteness conditions and centralisers in generalisations of Thompson's group $V$.}
\author{C.~Mart\'inez-P\'erez}
\address{Conchita Mart\'inez-P\'erez, Departamento de Matem\'aticas, Universidad de Zaragoza,
50009 Zaragoza, Spain} \email{conmar@unizar.es}
\author{F. Matucci}
\address{Francesco Matucci, D\'epartement de Math\'ematiques, Facult\'e des Sciences d'Orsay,
Universit\'e Paris-Sud 11, B\^atiment 425, Orsay, France}
\email{francesco.matucci@math.u-psud.fr}
\author{B.~ E.~A.~Nucinkis}
\address{Brita E.~A.~Nucinkis, Department of Mathematics, Royal Holloway, University of London, Egham, TW20 0EX, UK.}\email{brita.nucinkis@rhul.ac.uk}
\date{\today} 
\keywords{}
\subjclass[2000]{
20J05}
\thanks{This work was partially funded by an LMS Scheme 4 grant 41209. The first named author was supported by  Gobierno de Arag\'on, European Regional Development Funds and
MTM2010-19938-C03-03. 
The second author gratefully acknowledges the Fondation Math\'ematique Jacques Hadamard (ANR­-10-CAMP­-0151-­02 - FMJH - Investissement d'Avenir) for the support received during the development of this work. }
\begin{document}

\begin{abstract} We consider generalisations of Thompson's group $V$, denoted $V_r(\Sigma)$, which also include the groups of Higman, Stein and Brin.  We show that, under some mild hypotheses, $V_r(\Sigma)$ is the full automorphism group of a Cantor-algebra. Under some further minor restrictions, we prove
that these groups are of type $\F_\infty$ and that this implies that also centralisers of finite subgroups are of type $\F_\infty$. 
\end{abstract}

\maketitle

\section{Introduction}

\noindent Thompson's group $V$ is defined as a homeomorphism group of the Cantor-set.
The group $V$ has many interesting generalisations such as the Higman-Thompson groups $V_{n,r}$,  \cite{higman}, Stein's generalisations \cite{stein} and Brin's higher dimensional Thompson groups $sV$ \cite{brin1}. All these groups contain any finite group, contain free abelian groups of infinite rank, are finitely presented and of type $\FP_\infty$ (see work by several authors in
\cite{brown2, fluch++, hennigmatucci, desiconbrita2, stein}). 
The first and third authors together with Kochloukova \cite{desiconbrita2, britaconcha} 
further generalise these groups, denoted by $V_r(\Sigma)$ or $G_r(\Sigma)$, as automorphism groups of certain Cantor-algebras. We shall use the notation $V_r(\Sigma)$ in this paper. 
We show in Theorem \ref{fullauto} that they are the full automorphism groups of these algebras.

Fluch, Schwandt, Witzel and Zaremsky \cite{fluch++} 
use Morse-theoretic methods to prove that Brin's groups $sV$ are of type $\F_\infty.$  By adapting their methods we show, Theorem \ref{FPinfty}, that under some restrictions on the Cantor-algebra, 
which still comprehend all families mentioned above, $V_r(\Sigma)$ is of type $\F_\infty.$  We also give some constructions of further examples.

Bleak \emph{et al.} \cite{matuccietc} and the first and the third
authors \cite{britaconcha} show independently that centralisers of finite subgroups $Q$ in $V_{n,r}$ and $V_r(\Sigma)$ can be described as extensions
$$K \mono C_{V_r(\Sigma)}(Q) \epi V_{r_1}(\Sigma) \times \ldots 
\times V_{r_t}(\Sigma),$$
where $K$ is locally finite and $r_1,...,r_t$ are integers uniquely determined by $Q$. It was conjectured in \cite{britaconcha} that these centralisers are of type $\F_\infty$ if the groups $V_r(\Sigma)$ are. In Section \ref{centralisersection} we expand the description of the centralisers given in \cite{matuccietc, britaconcha}, which allows us to
prove that the conjecture holds true. This also implies that any of the generalised $V_r(\Sigma)$ which are of type $\F_\infty$ admit a classifying space for proper actions, which is a mapping telescope of cocompact classifying spaces for smaller families of finite subgroups. In other words, these groups are of Bredon type quasi-$\F_\infty$. For definitions and background the reader is referred to \cite{britaconcha}.

We conclude with giving a description of normalisers of finite subgroups in Section \ref{normaliser}. These turn up in computations of the source of the rationalised Farrell-Jones assembly map, where one needs to compute not only centralisers, but also the Weyl-groups $W_G(Q)=N_G(Q)/C_G(Q).$ For more 
detail see \cite{lueck-reich05}, or \cite{geoghegan-varisco} for an example where these are computed for Thompson's group $T$.

\subsection*{Acknowledgments} We would like to thank Dessislava Kochloukova for helpful discussions regarding Section \ref{fpinfty}, 
Claas R\"over for getting us to think about Theorem \ref{fullauto}
and the anonymous referee for very carefully reading an earlier version of this paper. We are also indebted to this referee for pointing out Lemma \ref{auxmainfinfty} to us.

\section{Background on generalised Thompson groups}

\subsection{Cantor-algebras}

\noindent We shall follow the notation of \cite[Section 2]{britaconcha} and begin by defining the Cantor algebras $U_r(\Sigma)$. Consider a finite set of colours $S=\{1,\ldots,s\}$ and associate to each $i \in S$ an integer  $n_i>1$, called arity of the colour $i.$ Let $U$ be a set on which, for all $i\in S$, the following operations are defined: an $n_i$-ary operation $\lambda_i\, :\,U^{n_i}\to U,$ and $n_i$ 1-ary operations $\alpha^1_i,\ldots,\alpha^{n_i}_i\, ; \alpha^j_i:U\to U.$ Denote $\Omega = \{ \lambda_i, \alpha_i^j \}_{i,j}$ and call $U$ an $\Omega$-algebra. For detail see \cite{Cohn} and \cite{desiconbrita2}. We write these operations on the right. We also consider, for each $i\in S$ and $v\in U,$ the map $\alpha_i:U\to U^{n_i}$ given by
$v\alpha_i:=(v\alpha^1_i,v\alpha^2_i,\ldots,v\alpha^{n_i}_i).$ The maps $\alpha_i$ are called descending operations, or expansions, and the maps $\lambda_i$ are called ascending operations, or contractions. Any word in the descending operations is called a descending word.

A {\sl morphism} between $\Omega$-algebras is a map commuting with all operations
in $\Omega$. Let $\mathfrak{B}_0$ be the category of all $\Omega$-algebras for some $\Omega$. 
An object $U_0(X) \in \mathfrak{B}_0$ 
is a {\sl free object} in $\mathfrak{B}_0$ with $X$ as a {\sl free basis}, if for any $S \in \mathfrak{B}_0$ 
any mapping $\theta : X \to S$ can be extended in a unique way to a morphism 
$U_0(X) \to S$.

For every set $X$ there is an $\Omega$-algebra, free on $X$,  called the {\sl $\Omega$-word algebra on $X$} and denoted by $W_\Omega(X)$ (see \cite[Definition 2.1]{desiconbrita2}).
Let $B\subset W_{\Omega}(X)$, $b\in B$ and $i$ a colour of arity $n_i$. The set
$$(B\setminus\{b\})\cup\{b\alpha_i^1,\ldots,b\alpha_i^{n_i}\}$$
is called a simple expansion of $B$. Analogously, if $b_1,\ldots,b_{n_i}\subseteq B$ are pairwise distinct,
$$(B\setminus\{b_1,\ldots,b_{n_i}\})\cup\{(b_1,\ldots,b_{n_i})\lambda_i\}$$
is a simple contraction of $B$.
A chain of simple expansions (contractions) is an expansion (contraction). A  subset $A\subseteq W_\Omega(X)$ is called {\sl admissible} if it can be obtained from the set $X$ by finitely many expansions or contractions.

\noindent We shall now define the notion of a Cantor-algebra. Fix a finite set $X$ and consider the variety of $\Omega$-algebras satisfying a certain set of identities as follows:

\begin{definition}\label{sigmadef}\cite[Section 2]{britaconcha} We denote by $\Sigma = \Sigma_1 \cup \Sigma_2$  the following set of
  laws in the alphabet $X$.

\begin{itemize}

\item[i)] A set of laws $\Sigma_1$ given by
  $$u\alpha_i\lambda_i=u,$$
 $$(u_1,\ldots,u_{n_i})\lambda_i\alpha_i=(u_1,\ldots,u_{n_i}),$$
for every $u\in W_\Omega(X)$,  $i\in S$, and $n_i$-tuple: $(u_1,\ldots,u_{n_i})\in W_\Omega(X)^{n_i}.$

\item[ii)] A second set of laws
$$\Sigma_2=\bigcup_{1\leq i<i'\leq s}\Sigma_2^{i,i'}$$
 where each $\Sigma_2^{i,i'}$ is either empty or  consists  of the following laws: consider first $i$ and fix a map $f:\{1,\ldots,n_{i}\}\to\{1,\ldots,s\}$. For each $1\leq j\leq n_{i}$, we see  $\alpha_{i}^j\alpha_{f(j)}$ as a set of length 2 sequences of descending operations and
let $\Lambda_{i}=\cup_{j=1}^{n_{i}}\alpha_{i}^j\alpha_{f(j)}$.  Do the same for $i'$ (with a corresponding map $f'$) to get $\Lambda_{i'}$.
We need to assume that $f,f'$ are chosen so that $|\Lambda_i|=|\Lambda_{i'}|$  and  fix a bijection $\phi:\Lambda_{i}\to\Lambda_{i'}$.
Then $\Sigma_2^{i,i'}$ is the set of laws
$$u\nu=u\phi(\nu)\quad \nu\in \Lambda_{i},u\in W_\Omega(X).$$

\end{itemize}

\noindent Factor out of $W_\Omega(X)$ the fully invariant congruence $\mathfrak{q}$ generated by $\Sigma$ to obtain an $\Omega$-algebra   $W_\Omega(X)/\mathfrak{q}$ satisfying the identities in $\Sigma$.

\noindent The algebra $W_\Omega(X)/\mathfrak{q}=U_r(\Sigma),$ where $r=|X|$, is called a {\sl Cantor-Algebra.}
\end{definition}

\noindent
As in \cite{desiconbrita2} we say that $\Sigma$ is {\sl valid}  if for any admissible $Y\subseteq W_\Omega(X)$, we have $|Y|=|\overline Y|$, where $\overline Y$ is the image of $Y$ under the epimorphism $W_\Omega(X) \twoheadrightarrow U_r(\Sigma).$ In particular this implies that $U_r(\Sigma)$ is a free object on $X$ in the class of those $\Omega$-algebras which satisfy the identities $\Sigma$ above.
In other words, this implies that $X$ is a basis.
\noindent If the set $\Sigma$ used to define $U_r(\Sigma)$ is valid, we also say that $U_r(\Sigma)$ is valid.
As done for $W_\Omega(X)$, we say that a subset $A\subset U_r(\Sigma)$ is {\sl admissible} if it can be obtained by a finite number of expansions or contractions from $\overline X$, where expansions and contractions mean the same as before. We shall, from now on,  not distinguish between $X$ and $\overline X$. If $A$ can be obtained 
from a subset $B$ by expansions only, we will say that $A$ is an expansion or a descendant of $B$ and we will write $B\leq A$. If $A$ can be obtained from $B$ by applying a single descending operation, i.e., if
$$A=(B\setminus\{b\})\cup\{b\alpha_i^1,\ldots,b\alpha_i^{n_i}\}$$
for some colour $i$ of arity $n_i$, then we will say that $A$ is a simple expansion of $B$.

\begin{remark}\label{adm-basis}
Every admissible subset is a basis of $U_r(\Sigma)$ (\cite[Lemma 2.5]{desiconbrita2}). Moreover, any set obtained from a basis by performing expansions or contractions is also a basis.
Furthermore, the cardinality $m$ of every admissible subset satisfies $m\equiv r$ mod $d$ for $d:=\text{gcd}\{n_i-1\mid i=1,\ldots,s\}.$
In particular, any basis with $m$ elements can be transformed into one of $r$ elements. Hence $U_r(\Sigma)=U_m(\Sigma)$ and we may assume that $r \leq d.$ 
\end{remark}

The converse of the first statement in Remark \ref{adm-basis} is also true:

\begin{theorem}\label{fullauto} Let $U_r(\Sigma)$ be a valid and bounded Cantor algebra. Then $V_r(\Sigma)$ is the full group of $\Omega$-algebra automorphisms of $U_r(\Sigma)$.
\end{theorem}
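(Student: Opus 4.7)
Let $\phi \in \operatorname{Aut}(U_r(\Sigma))$ and set $y_i := \phi(x_i)$, so that $Y := \phi(X) = \{y_1, \ldots, y_r\}$ is a basis of $U_r(\Sigma)$. My plan is to prove that $Y$ is itself admissible; then $\phi$ is the unique $\Omega$-extension of the bijection $X \to Y$ between admissible subsets, hence lies in $V_r(\Sigma)$.

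First, I would use the normal form available in a valid, bounded Cantor algebra to write each $y_i$ canonically as a term in $X$ modulo $\Sigma_1$ and $\Sigma_2$, with the boundedness hypothesis controlling the complexity of these normal forms. Using this, I would show by induction on the total normal-form complexity of $\{y_1, \ldots, y_r\}$ that there exists an admissible subset $B \subseteq U_r(\Sigma)$ with $Y \subseteq B$. Starting from $X$, one performs expansions sufficient to produce the descending portions of each $y_i$, and uses contractions via the operations $\lambda_k$ to assemble any $\lambda$-factors that occur in the normal forms. The fact that $Y$ is a basis, not merely an arbitrary $r$-element subset, is what should rule out obstructions such as one $y_i$ being a proper descendant of another, which would prevent their joint realisation as basis elements of a single admissible set.

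Once $B$ is produced, the conclusion follows from a rigidity observation. Set $A := \phi^{-1}(B)$; then $A$ is a basis (being the image of the basis $B$ under the automorphism $\phi^{-1}$) and $X \subseteq A$ since $Y = \phi(X) \subseteq B$. But any basis of the free $\Omega$-algebra $U_r(\Sigma)$ containing $X$ must in fact equal $X$: if $a \in A \setminus X$, then an assignment $A \to U_r(\Sigma)$ which is the identity on $X$ and sends $a$ to some $u \neq a$ would admit no extension to an $\Omega$-morphism, since the restriction to $X$ forces the extension to be the identity, giving $u = a$ and a contradiction. Hence $A = X$, so $|B| = |A| = r$, whence $B = Y$ (as $Y \subseteq B$ with $|Y| = r$), and $Y$ is admissible as required.

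The main obstacle I expect is the inductive construction of $B$. Whereas any single element of $U_r(\Sigma)$ trivially lies in some admissible subset, placing an entire basis $Y$ as a subset of one admissible set is delicate: two arbitrary elements cannot in general both be basis elements of a common admissible set. The crucial input is that $Y$ is a basis, a property which through validity of $\Sigma$ should translate into a combinatorial "antichain" condition on the normal forms of the $y_i$'s guaranteeing that their joint realisation is possible; making this translation precise, while using boundedness to keep the inductive complexity finite, will be the technical heart of the proof.
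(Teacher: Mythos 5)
Your reduction to showing that $Y=\phi(X)$ is admissible is the right first move, and your ``rigidity'' observation (no basis properly contains another basis, by the universal property of free objects) is correct. But there is a genuine gap at the centre of the argument: the construction of an admissible set $B$ with $Y\subseteq B$. Since admissible sets are bases and no proper subset of a basis is a basis, any admissible $B\supseteq Y$ would immediately equal $Y$; so producing such a $B$ is not a stepping stone towards the theorem --- it \emph{is} the theorem --- and your plan for it (normal forms, induction on complexity, an unspecified ``antichain'' condition extracted from the basis property of $Y$) is precisely the part you flag as ``the technical heart'' and leave undone. It is also unclear what normal form you intend to use (the paper uses none), and boundedness is a statement about upper bounds of admissible sets, not a device for ``controlling the complexity of normal forms,'' so the role you assign to that hypothesis does not match how it must actually enter.

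The paper's proof sidesteps the difficulty by never attempting to place the elements $y_i$ themselves in a common admissible set. Each $y_i$ separately lies in \emph{some} admissible $T_i$ (easy, since $X$ generates $U_r(\Sigma)$); boundedness, via \cite[Lemma 2.8]{britaconcha}, yields a common admissible upper bound $Z$ of all the $T_i$; then $Z$ contains, for each $i$, a set $\widehat Y_i$ of descendants of $y_i$, and Remark \ref{tec} applied to $Y\le Z$ shows the $\widehat Y_i$ are pairwise disjoint, so $\widehat Y=\bigcup_i\widehat Y_i$ is an expansion of the basis $Y$ and hence itself a basis sitting inside the basis $Z$. Since no proper subset of a basis is a basis, $\widehat Y=Z$ is admissible, and $Y$ is recovered from $\widehat Y$ by contractions, hence admissible. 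If you want to complete your write-up, replace your construction of $B\supseteq Y$ by this descendant argument; your closing rigidity step then becomes unnecessary, as the identity $\widehat Y = Z$ already does that work.
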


\begin{proof}
We need to show that, under our hypotheses, a subset of $U_r(\Sigma)$ 
is admissible if and only if it is a basis. In light of Remark \ref{adm-basis} we only need to show that any basis of $U_r(\Sigma)$ is admissible. Let $Y=\{y_1,...,y_n\}$ be an arbitrary basis. Since $X$ is a basis, it generates all of $U_r(\Sigma)$. Hence, for each $y_i \in Y$ there exists some admissible subset $T_i$ of $U_r(\Sigma)$ containing $y_i$. Now let $Z$ be a common upper bound of the $T_i$, $i=1,...n.$  This exists by \cite[Lemma 2.8]{britaconcha}, using the argument of \cite[Proposition 3.4]{desiconbrita2}. The set $Z$ is an admissible subset containing a set $\widehat Y$ whose elements are obtained by performing   finitely many descending  operations in $Y$. Denote by $\widehat Y_i$ the subsets of $\widehat Y$ given by the following: $\{y_i\} \leq \widehat Y_i$ and $\widehat Y = \cup\widehat Y_i$.
Since $Y$ and $Z$ are bases and $Y \le Z$,
then Remark \ref{tec} below implies that
$\widehat Y_i \cap \widehat Y_j = \emptyset$, for $i \ne j$.
By Remark \ref{adm-basis}, since 
$\widehat Y$ is admissible, it is a basis.  Remark \ref{adm-basis} also implies that $Z$ is a basis. It follows from the definition of free basis, see for example \cite[Page 3]{desiconbrita2}, that no proper subset of a basis is a basis. Hence $\widehat Y=Z$ is admissible, thus $Y$ is as well.
\end{proof}

\begin{remark}\label{tec} Let $B$ be a basis in a valid $U_r(\Sigma)$, and let $A \leq B$.  The fact that $A$ is also a basis implies that for any element $b\in B$ there is a single $A(b)\in A$ such that $A(b)w=b$ for some descending word $w$. In this case we say that $A(b)$ is a prefix of $b$.
\end{remark}

\noindent Any element of $U_r(\Sigma)$ which is obtained from the elements in $X$ by applying descending operations only is called a {\sl leaf}. We denote by $\L$ the set of leaves. Observe that $\L$ depends on $X$.
 Note also that for any leaf $l$ there is some basis $A \geq X$ with $l\in A$. Let $l\in\L$, we put:
 $$l(\L):=\{b\in\L\mid lw=bw'\text{ for descending words $w,w'$}\}$$
and for a set of leaves $B\subseteq\L$ we also put
$$B(\L)=\bigcup_{b\in B}b(\L).$$

We say  that $U_r(\Sigma)$ is {\sl bounded} (see \cite[Definition 2.7]{britaconcha})  if for all admissible subsets $Y$ and $Z$ such that there is some admissible $A\leq Y,Z$, there is a unique least upper bound of $Y$ and $Z$. By a unique least upper bound we mean an admissible subset $T$ such that $Y \leq T$ and $Z \leq T$, and whenever there is an admissible set $S$ also satisfying $Y\leq S$ and $Z\leq S$, then $T \leq S.$

\begin{definition}\label{groups}\cite[Definition 2.12]{britaconcha} Let $U_r(\Sigma)$ be a valid Cantor algebra. $V_r(\Sigma)$ denotes the group of all $\Omega$-algebra automorphisms of $U_r(\Sigma)$, which are induced by a  map $V\to W$, where $V$ and $W$ are admissible subsets of the same cardinality.
\end{definition}

\noindent Throughout we shall denote group actions on the left.

\begin{remark}\label{tecrem1}  For any basis $A \geq X$ and any $g\in V_r(\Sigma)$, there is some $B$ with $A\leq B,gB$. To see it, take $B$ such that $A,g^{-1}A\leq B,$ which exists by \cite[Lemma 2.8]{britaconcha}.
\end{remark}

\medskip\noindent

\subsection{Brin-like groups}\label{newgroups}

\noindent In this section we give some examples of the groups $V_r(\Sigma)$, which generalise both Brin's groups $sV$ \cite{brin1} and Stein's groups $V(l,A,P)$ \cite{stein}.  Furthermore, these groups satisfy the conditions of Definition \ref{complete} below, and we show in Section \ref{fpinfty} that they 
are of type $\F_\infty.$

\begin{example}\label{brinexamples} 
\begin{enumerate}
\item We begin by recalling the definition of the Brin-algebra \cite[Section 2]{desiconbrita2} and \cite[Example 2.4]{britaconcha}:  Consider the set of $s$ colours $S=\{1,\ldots,s\}$, all of which have arity 2, together with the relations:
$\Sigma:=\Sigma_1\cup\Sigma_2$
with $$\Sigma_2:=\{\alpha_i^l\alpha_j^t=\alpha_j^t\alpha_i^l\mid
1\leq i \not= j\leq s; l,t=1,2 \}.$$

\noindent Then $V_r(\Sigma)=sV$ is Brin's group.

\item Furthermore one can also consider  $s$ colours,  all of arity $n_i=n\in \N,$ for all $1\leq i\leq s.$ Let
$$\Sigma_2:=\{\alpha_i^l\alpha_j^t=\alpha_j^t\alpha_i^l\mid
1\leq i \not= j\leq s; 1\leq l,t\leq n \}.$$

\noindent Here $V_r(\Sigma)=sV_n$ is Brin's group of arity $n.$

\noindent It was shown in \cite[Example 2.9]{britaconcha} that in this case $U_r(\Sigma)$ is valid and bounded.

\item We can also mix arities. Consider $s$ colours, each of arity $n_i\in \N$ $(i=1,...,s)$, together with $\Sigma:=\Sigma_1\cup\Sigma_2$
where $$\Sigma_2:=\{\alpha_i^l\alpha_j^t=\alpha_j^t\alpha_i^l\mid
1\leq i \not= j\leq s; 1\leq l\leq n_i; 1\leq t\leq n_j \}.$$

\noindent We denote these mixed-arity Brin-groups by $V_r(\Sigma)=V_{\{n_1\},...,\{n_s\}}.$ 

\noindent The same argument as in \cite[Lemma 3.2]{desiconbrita2} yields that the Cantor-algebra $U_r(\Sigma)$ in this case is also valid and bounded.

\end{enumerate}
\end{example}

\medskip
\begin{tikzpicture}[scale=0.4]

  \draw[black, dashed]
    (1,0) -- (3, 3) -- (5,0);

  \draw[black] (0,-3)--(1,0)--(2,-3);
  \draw[black] (1,-3)--(1,0);
  \draw[black] (4,-3)--(5,0)--(6,-3);
  \draw[black] (5,-3)--(5,0);

      \filldraw (0,-3) circle (0.3pt) node[below=4pt]{$1$};
      \filldraw (1,-3) circle (0.3pt) node[below=4pt]{$2$};
      \filldraw (2,-3) circle (0.3pt) node[below=4pt]{$3$};
      \filldraw (4,-3) circle (0.3pt) node[below=4pt]{$4$};
      \filldraw (5,-3) circle (0.3pt) node[below=4pt]{$5$};
      \filldraw (6,-3) circle (0.3pt) node[below=4pt]{$6$};

  \draw[black]  (11,0) -- (13, 3) -- (15,0);
  \draw[black] (13,0)--(13,3);

  \draw[black, dashed] (10.5,-3)--(11,0)--(11.5,-3);
 \draw[black, dashed] (12.5,-3)--(13,0)--(13.5,-3);
  \draw[black,dashed] (14.5,-3)--(15,0)--(15.5,-3);

      \filldraw (10.5,-3) circle (0.3pt) node[below=4pt]{$1$};
      \filldraw (11.5,-3) circle (0.3pt) node[below=4pt]{$4$};
      \filldraw (12.5,-3) circle (0.3pt) node[below=4pt]{$2$};
      \filldraw (13.5,-3) circle (0.3pt) node[below=4pt]{$5$};
      \filldraw (14.5,-3) circle (0.3pt) node[below=4pt]{$3$};
      \filldraw (15.5,-3) circle (0.3pt) node[below=4pt]{$6$};

\end{tikzpicture}

\bigskip\centerline{Figure 1: Visualising the identities in $\Sigma_2$ for $V_{\{2\},\{3\}}.$}

\begin{example}\label{stein}
We now recall the laws $\Sigma_2$ for Stein's groups \cite{stein}:  Let $P\subseteq\Q_{>0}$ be a finitely generated multiplicative group. Consider a basis of $P$ of the form $\{n_1,\ldots,n_s\}$ with all $n_i \geq 0$ ($i=1,...,s$).
Consider $s$ colours of arities $\{n_1,\ldots,n_s\}$ and let $\Sigma=\Sigma_1\cup\Sigma_2$ with $\Sigma_2$ the set of identities given by the following order preserving identification:
$$\{\alpha_i^1\alpha_j^1,\ldots,\alpha_i^1\alpha_j^{n_j},\alpha_i^2\alpha_j^1,\ldots,\alpha_i^2\alpha_j^{n_j},\ldots,\alpha_i^{n_i}\alpha_j^1,\ldots,\alpha_i^{n_i}\alpha_j^{n_j}\}=$$
$$\{\alpha_j^1\alpha_i^1,\ldots,\alpha_j^1\alpha_i^{n_i},\alpha_j^2\alpha_i^1,\ldots,\alpha_j^2\alpha_i^{n_i},\ldots,\alpha_j^{n_j}\alpha_i^1,\ldots,\alpha_j^{n_j}\alpha_i^{n_i}\},$$
where $i \neq j$ and $i,j\in \{1,...,s\}.$

The resulting Brown-Stein algebra $U_r(\Sigma)$ is valid and bounded, see, for example \cite[Lemma 2.11]{britaconcha}. We denote the resulting groups $V_r(\Sigma)=V_{\{n_1,...,n_s\}}.$

\end{example}

\medskip
\begin{tikzpicture}[scale=0.4]

  \draw[black,dashed]
    (1,0) -- (3, 3) -- (5,0);

  \draw[black] (0,-3)--(1,0)--(2,-3);
  \draw[black] (1,-3)--(1,0);
  \draw[black] (4,-3)--(5,0)--(6,-3);
  \draw[black] (5,-3)--(5,0);

      \filldraw (0,-3) circle (0.3pt) node[below=4pt]{$1$};
      \filldraw (1,-3) circle (0.3pt) node[below=4pt]{$2$};
      \filldraw (2,-3) circle (0.3pt) node[below=4pt]{$3$};
      \filldraw (4,-3) circle (0.3pt) node[below=4pt]{$4$};
      \filldraw (5,-3) circle (0.3pt) node[below=4pt]{$5$};
      \filldraw (6,-3) circle (0.3pt) node[below=4pt]{$6$};

  \draw[black]  (11,0) -- (13, 3) -- (15,0);
  \draw[black] (13,0)--(13,3);

  \draw[black, dashed] (10.5,-3)--(11,0)--(11.5,-3);
 \draw[black, dashed] (12.5,-3)--(13,0)--(13.5,-3);
  \draw[black,dashed] (14.5,-3)--(15,0)--(15.5,-3);

      \filldraw (10.5,-3) circle (0.3pt) node[below=4pt]{$1$};
      \filldraw (11.5,-3) circle (0.3pt) node[below=4pt]{$2$};
      \filldraw (12.5,-3) circle (0.3pt) node[below=4pt]{$3$};
      \filldraw (13.5,-3) circle (0.3pt) node[below=4pt]{$4$};
      \filldraw (14.5,-3) circle (0.3pt) node[below=4pt]{$5$};
      \filldraw (15.5,-3) circle (0.3pt) node[below=4pt]{$6$};

\end{tikzpicture}

\bigskip\centerline{Figure 2: Visualising the identities in $\Sigma_2$ for $V_{\{2,3\}}.$}

\begin{definition}\label{def-brinlike}

Let $S$ be a set  of $s$ colours together with arities $n_i$ for each $i=1,...,s.$ Suppose $S$ can be partitioned into $m$ disjoint subsets $S_k$  such that for each $k$, the set $\{n_i \, |\, i \in S_k\}$ is a basis for a finitely generated multiplicative group $P_k \subseteq \Q_{\geq 0}.$

Consider $\Omega$-algebras on $s$ colours with arities as above and the set of identities $\Sigma = \Sigma_1 \cup \Sigma_2$, where $\Sigma_2=\Sigma_{2_1}\cup\Sigma_{2_2}$ is given as follows:

$\Sigma_{2_1}$ is given by the following order-preserving identifications (as in the Brown-Stein algebra in Example \ref{stein}): for each $k \leq m$ we have

$$\{\alpha_i^1\alpha_j^1,\ldots,\alpha_i^1\alpha_j^{n_j},\alpha_i^2\alpha_j^1,\ldots,\alpha_i^2\alpha_j^{n_j},\ldots,\alpha_i^{n_i}\alpha_j^1,\ldots,\alpha_i^{n_i}\alpha_j^{n_j}\}=$$
$$\{\alpha_j^1\alpha_i^1,\ldots,\alpha_j^1\alpha_i^{n_i},\alpha_j^2\alpha_i^1,\ldots,\alpha_j^2\alpha_i^{n_i},\ldots,\alpha_j^{n_j}\alpha_i^1,\ldots,\alpha_j^{n_j}\alpha_i^{n_i}\},$$
where $i \neq j$ and $i,j\in S_k.$

$\Sigma_{2_2}$ is given by Brin-like identifications (as in Example \ref{brinexamples}): 
for all $i \in S_k$ and $j \in S_l$ such that $S_k \cap S_l = \emptyset$ $ (k\neq l,k,l \leq m)$, we have

$$\Sigma_{2_2}:=\{\alpha_i^l\alpha_j^t=\alpha_j^t\alpha_i^l\mid
1\leq l\leq n_i; 1\leq t\leq n_j \}.$$

We call the resulting Cantor algebra $U_r(\Sigma)$ Brin-like and denote the generalised Higman-Thompson group by $V_r(\Sigma)=V_{\{n_i \,|\,i\in S_1\},...,\{n_i \,|\,i\in S_m\}}.$
\end{definition}

\begin{example}
From Definition \ref{def-brinlike} we notice the following examples:
\begin{enumerate}
\item If $m=s$, we have the Brin-groups as in Example \ref{brinexamples} (iii).
\item If $m=1$, we have Stein-groups as in Example \ref{stein}.
\item Suppose we have that $\{n_i \, |\, i\in S_k\} = \{n_i \, |\, i\in S_l\}$ for each $l,k \leq m$. Then the resulting group can be viewed as a higher dimensional Stein-group $mV_{\{n_i \, |\, i\in S_m\}}$.
\end{enumerate}
\end{example}

\begin{question}
Suppose $m\notin \{1,s\}.$  What are the conditions on the arities for the groups $V_{\{n_i \,|\,i\in S_1\},...,\{n_i \,|\,i\in S_m\}}$  not 
be isomorphic to any of the known generalised Thompson groups such as the Higman-Thompson groups, Stein's groups or Brin's groups? More generally, when are two of these groups
non-isomorphic? See \cite{warrenconcha} for some special cases.
\end{question}

\begin{remark}\label{cantorcube}
We can view these groups as bijections of $m$-dimensional cuboids in the $m$-dimensional Cartesian product of the Cantor-set, similarly to the description given for $sV$, the Brin-Thompson groups. In each direction, we get subdivisions of the Cantor-set as in the Stein-Brown groups given by $\Sigma_{2_1}$. 
\end{remark}

\begin{lemma}
The Brin-like Cantor-algebras are valid and bounded.
\end{lemma}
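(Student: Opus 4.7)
The strategy is to adapt the proofs of \cite[Lemma 3.2]{desiconbrita2} and \cite[Lemma 2.11]{britaconcha} simultaneously, reducing the problem to the facts that the one-dimensional Brown--Stein algebra is valid and bounded and that operations in different coordinates commute. Following Remark \ref{cantorcube}, I would first build a concrete geometric model $\hat U$ consisting of finite collections of cuboids in a Cartesian product $C=C_1\times\cdots\times C_m$, where each $C_k$ carries the Brown--Stein subdivision scheme associated to the arities $\{n_i\mid i\in S_k\}$. The descending operation $\alpha_i$ with $i\in S_k$ subdivides the $k$-th coordinate of a cuboid according to the Stein rule for that block and leaves the other factors intact; ascending operations reverse this procedure when possible. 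This gives a bona fide $\Omega$-algebra whose cuboid structure is manifest.

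Next I would verify that $\hat U$ satisfies the identities of $\Sigma$. Those in $\Sigma_1$ are immediate, the Brin-type identities in $\Sigma_{2_2}$ hold because operations on disjoint coordinates commute, and the Stein-type identities in $\Sigma_{2_1}$ hold coordinate-wise by \cite[Lemma 2.11]{britaconcha}. There is therefore a canonical $\Omega$-algebra epimorphism $\pi:U_r(\Sigma)\to\hat U$. Validity follows by showing that $\pi$ maps each admissible subset $Y$ bijectively onto a cuboid partition of the $r$ copies of $C$, so $|Y|=|\overline Y|$. For boundedness, given admissible $Y,Z$ with a common admissible predecessor $A\leq Y,Z$, the cuboid partitions $\pi(Y)$ and $\pi(Z)$ admit a unique coarsest common refinement computed coordinate by coordinate (each coordinate inherits the least common refinement from the one-dimensional Brown--Stein algebra). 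This refinement lifts to the required least upper bound $T\geq Y,Z$.

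The main obstacle is verifying that in the concrete model the Stein-type identifications in direction $k$ are compatible with previously performed descending operations indexed by blocks $S_l$ with $l\neq k$, so that cuboid partitions are independent of the order in which coordinatewise subdivisions are carried out. This compatibility ultimately rests on the disjointness of the blocks $S_k$ built into Definition \ref{def-brinlike}, together with the validity of the one-coordinate Brown--Stein algebra. Once it is in place, the remaining steps proceed in direct parallel with the argument of \cite[Lemma 3.2]{desiconbrita2}, the Brin case corresponding to the degenerate situation where each block $S_k$ is a singleton.
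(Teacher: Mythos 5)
Your proposal is correct and follows essentially the same route as the paper, which simply invokes the cuboid description of Remark \ref{cantorcube} and adapts the argument of \cite[Lemma 3.2]{desiconbrita2}; you have merely spelled out the details (the concrete model on a product of Cantor sets, coordinatewise verification of $\Sigma_{2_1}$ and $\Sigma_{2_2}$, and the coordinatewise least common refinement) that the paper leaves implicit.
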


\begin{proof}
Using the description given in Remark \ref{cantorcube} we can apply the same argument as in \cite[Lemma 3.2]{desiconbrita2}.
\end{proof}

\noindent The groups defined in this subsection all satisfy the following condition on the relations in $\Sigma$, and hence satisfy the conditions needed in Section \ref{fpinfty}.

\begin{definition}\label{complete}
Using the notation of Definition \ref{sigmadef}, suppose that for all $i\neq i'$, $i,i' \in S$ we have that $\Sigma_2^{i,i'} \neq \emptyset$ and that $f(j)=i'$ for all $j=1,...,n_i$ and $f'(j')=i$ for all $j'=1,...,n_{i'}$.
Then we say that $\Sigma$ (or equivalently $U_r(\Sigma)$) is {\sl complete}.
\end{definition}

\begin{remark}\label{brinlikecomplete} The Brin-like Cantor-algebras are complete.
\end{remark}

\section{Finiteness conditions}\label{fpinfty}

\noindent In this section we prove the following result:

\begin{theorem}\label{FPinfty} Let $\Sigma$ be valid, bounded and complete. Then $V_r(\Sigma)$ is of type $\F_\infty$.
\end{theorem}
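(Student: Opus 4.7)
The plan is to apply Brown's criterion for $\F_\infty$ using an action on a contractible complex built from admissible subsets, filtered by an appropriate height function, and then to control the connectivity of the descending links via Bestvina--Brady discrete Morse theory, following the strategy of Fluch--Schwandt--Witzel--Zaremsky for Brin's groups $sV$.

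First I would introduce the Stein--Farley--type complex $X$ associated to $V_r(\Sigma)$. The vertices are (equivalence classes of) pairs $(g,A)$ where $A$ is admissible and $g\in V_r(\Sigma)$, with an edge $(g,A)\le (g,B)$ whenever $B$ is obtained from $A$ by a single ``elementary'' expansion. Here an elementary expansion should be defined as simultaneously expanding some subset of leaves by a single descending operation of a common colour (or, in the spirit of Stein, by a full elementary family dictated by $\Sigma$). Using that $U_r(\Sigma)$ is bounded, any two admissible sets with a common refinement have a unique least upper bound, so the poset of admissible sets is a directed set and the resulting complex is contractible (this is the usual Brown--Stein argument, extended in Theorem~\ref{fullauto}'s setup by the existence of common refinements). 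The group $V_r(\Sigma)$ acts on $X$ by left multiplication on the first coordinate, with cell stabilisers that are finite (they permute a finite admissible set) and therefore of type $\F_\infty$.

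Next I would define the height function $h(g,A)=|A|$, which is well defined modulo the constant $d=\gcd\{n_i-1\}$ by Remark~\ref{adm-basis}. The sublevel sets $X_{\le n}$ are $V_r(\Sigma)$-cocompact (finitely many orbits of admissible sets of bounded size, since each orbit is determined by the combinatorial type). By Brown's criterion it then suffices to prove that the connectivity of the pair $(X_{\le n+1},X_{\le n})$ tends to infinity with $n$, and for this one applies Morse theory: one shows that for large $n$ the descending link of every vertex of height $n+1$ is highly connected.

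The main technical work, and the step I expect to be the main obstacle, is the connectivity of the descending links. As in \cite{fluch++}, the descending link at $(g,A)$ should decompose as a join-like structure indexed by the possible ``elementary contractions'' of $A$, and it is naturally isomorphic to a complex of matchings on the leaves of $A$ encoding which tuples can simultaneously be contracted. Here the completeness assumption from Definition~\ref{complete} is essential: it guarantees that for every pair of colours $i,i'$ the identification $\Sigma_2^{i,i'}$ is nonempty and of the ``full'' type $f(j)=i'$, $f'(j')=i$, so that every elementary expansion by colour $i$ can be ``commuted'' with one by colour $i'$. This is exactly what makes the matching/disconnect complex on the leaves resemble the one used by Fluch--Schwandt--Witzel--Zaremsky, and it is what allows one to bound its connectivity below by a function of $|A|$. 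Once this connectivity bound is in hand, Brown's criterion delivers $\F_\infty$.

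In summary: build $X$ from admissible sets via elementary expansions, use boundedness for contractibility and cocompactness of sublevel sets, and use completeness to reduce the descending link analysis to a matching-complex calculation of the type already carried out for $sV$, whose connectivity tends to infinity and hence yields $\F_\infty$ for $V_r(\Sigma)$ by Brown's criterion.
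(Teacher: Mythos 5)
Your proposal follows essentially the same route as the paper: the Stein complex on admissible subsets (contractible by boundedness and the pushing argument via least upper bounds), Brown's criterion with the Morse function $t(A)=|A|$, and a descending-link analysis reducing to a matching-type complex as in Fluch--Schwandt--Witzel--Zaremsky, with completeness used exactly where you say it is. The only point to tighten is the definition of an elementary expansion: the paper distinguishes \emph{elementary} (no repeated colours along any leaf-to-descendant path, which is what defines the simplices and requires completeness to be well defined modulo $\Sigma$) from \emph{very elementary} (paths of length at most one, which gives the subcomplex $L_0(A)$ whose barycentric subdivision is the matching complex), and both notions are needed in the Morse argument.
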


We closely follow  \cite{fluch++}, where it is shown that Brin's groups $sV$ are of type $\F_\infty.$  We shall use  a different notation, which is more suited to our set-up, and will explain where the original argument has to be modified to get the more general case. Throughout this section  $U_r(\Sigma)$ denotes a  valid, bounded and complete Cantor-algebra.

\begin{definition} Let $B\leq A$ be admissible subsets of $U_r(\Sigma)$. We say that the expansion $B\leq A$ is {\sl elementary} if there are no repeated colours in the paths from leaves in $B$ to their descendants in $A$.  Since $\Sigma$ is complete, this condition is preserved by the relations in $\Sigma$. We denote an elementary expansion by $B\preceq A.$  We say that the expansion is {\sl very elementary} if all paths have length at most 1. In this case we write $B\sqsubseteq A$.
\end{definition}
 
\begin{remark} If $A\preceq B$ is elementary (very elementary) and $A\leq C\leq B$, then $A\preceq C$ and $C\preceq B$ are elementary (very elementary)).\end{remark}

\begin{lemma}\label{thm:unique-descendant}
Let $\Sigma$ be  complete, valid and bounded. Then any admissible basis $A$ has a unique maximal elementary admissible descendant, denoted by $\mathcal{E}(A).$
\end{lemma}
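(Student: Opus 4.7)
The plan is to construct $\mathcal{E}(A)$ explicitly as the expansion obtained by applying every colour exactly once to each leaf of $A$, verify it is well-defined, and then show it dominates every elementary descendant of $A$ in the partial order $\preceq$. The key preliminary step is a set-theoretic commutativity of two-step expansions: for distinct colours $i,i'\in S$ and any leaf $u$, the completeness hypothesis ($f(j)=i'$ for all $j$, $f'(j')=i$ for all $j'$) forces $\Lambda_i=\{\alpha_i^j\alpha_{i'}^k\}_{j,k}$ and $\Lambda_{i'}=\{\alpha_{i'}^k\alpha_i^j\}_{j,k}$, and the bijection $\phi$ together with the identities $u\nu=u\phi(\nu)$ of $\Sigma_2^{i,i'}$ yields
$$\{u\alpha_i^j\alpha_{i'}^k \mid j,k\} = \{u\alpha_{i'}^{k}\alpha_i^{j} \mid j,k\}$$
as subsets of $U_r(\Sigma)$.

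For each leaf $a\in A$, define $\mathcal{E}(\{a\})$ by applying $\alpha_{\sigma(1)},\ldots,\alpha_{\sigma(s)}$ successively to $a$ for some permutation $\sigma$ of $\{1,\ldots,s\}$. A bubble-sort argument --- any two permutations differ by a sequence of adjacent transpositions, each of which preserves the intermediate basis by the previous equality applied uniformly to each of its elements --- shows $\mathcal{E}(\{a\})$ is independent of $\sigma$. Set $\mathcal{E}(A)=\bigcup_{a\in A}\mathcal{E}(\{a\})$; validity of $\Sigma$ ensures that the pieces $\mathcal{E}(\{a\})$ are disjoint and that $\mathcal{E}(A)$ is an admissible basis, and by construction every descending path from $A$ to $\mathcal{E}(A)$ uses every colour exactly once, so $A\preceq\mathcal{E}(A)$.

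For maximality, let $A\preceq B$ be any elementary expansion. By Remark \ref{tec}, each $b\in B$ has a unique prefix $A(b)\in A$, reached by a descending word with pairwise distinct colours from some set $T_b\subseteq S$. Applying the operations $\alpha_i$ for $i\in S\setminus T_b$ to $b$, in any order, produces an elementary expansion $B\preceq B'$ in which every path from $A$ uses each colour exactly once; by the order-independence step, $B'=\mathcal{E}(A)$. Hence $B\preceq\mathcal{E}(A)$ for every elementary descendant $B$ of $A$, so $\mathcal{E}(A)$ is the unique maximum, and in particular the unique maximal, elementary descendant. The main obstacle is the upgrading of the pointwise relations of $\Sigma_2$ to a set-theoretic commutativity of two-step colour expansions valid for every pair of distinct colours; completeness is tailored precisely so that this commutativity is uniform across all pairs, after which the argument reduces to a finite combinatorial induction on permutations of $\{1,\ldots,s\}$.
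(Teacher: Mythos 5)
Your proposal is correct and follows essentially the same route as the paper, whose proof consists only of the one-line construction "apply all descending operations exactly once to every element of $A$" (yielding $n_1\cdots n_s|A|$ leaves); you supply the verifications the paper leaves implicit, namely that completeness upgrades $\Sigma_2$ to set-theoretic commutativity of two-step colour expansions, that this makes the construction order-independent, and that any elementary descendant completes to $\mathcal{E}(A)$. The only compressed spot is the final "by the order-independence step, $B'=\mathcal{E}(A)$", which strictly needs a short induction on the colour set (reducing a tree-shaped complete elementary expansion to a layered one) rather than order-independence of layered expansions alone, but the ingredients you assembled suffice for that.
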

\begin{proof} Let $\mathcal{E}(A)$ be the admissible subset of $n_1\ldots n_s|A|$ leaves obtained by applying all descending operations exactly once to every element of $A$. \end{proof}

\subsection{The Stein subcomplex}

Denote by  $\mathcal{P}_r$ the poset of of admissible bases in $U_r(\Sigma).$ The same argument as in  \cite[Lemma 3.5 and Remark 3.7]{desiconbrita2}  shows that its geometric realisation $|\mathcal{P}_r|$ is contractible, and that $V_r(\Sigma)$ acts on $\mathcal{P}_r$ with finite stabilisers. In \cite{desiconbrita2, britaconcha} this poset was denoted by $\mathfrak{A}$, but here we will follow the notation of \cite{fluch++}. This poset is essentially the same as the poset of \cite{fluch++} denoted $\mathcal{P}_r$ there as well.

We now construct the Stein complex $\mathcal{S}_r(\Sigma)$, which is a subcomplex of $|\mathcal{P}_r|$. The vertices in $\mathcal{S}_r(\Sigma)$ are given by the admissible subsets
of $U_r(\Sigma)$. The  $k$-simplices are given by chains of  expansions $Y_0\leq\ldots\leq Y_k$, where $Y_0\preceq Y_k$ is an elementary expansion.

\begin{lemma}\label{core}
Let $A,B \in \mathcal{P}_r$ with $A< B$. There exists a unique $A< B_0\leq B$ such that $A\prec B_0$ is elementary and for any $A\leq C \leq B$ with $A\preceq C$ elementary we have $C \preceq B_0.$ 
\end{lemma}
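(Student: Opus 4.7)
The plan is to construct $B_0$ explicitly by ``truncating'' the expansion from $A$ to $B$ at the first colour repetition along each branch, and then to check that this candidate is indeed the maximum of all elementary refinements of $A$ that sit below $B$.

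First, I fix $a\in A$ and consider the sub-expansion $\{a\}\le B_a$, where $B_a$ is the set of elements of $B$ having $a$ as prefix (well-defined by Remark \ref{tec}). This is recorded by a tree $T_a$ with root $a$, leaves $B_a$, and a single colour applied at each internal node. Call a node $v$ of $T_a$ a \emph{stop-node} if the colour $T_a$ assigns at $v$ already occurs on the descending word from $a$ to $v$. Truncating $T_a$ at its stop-nodes (together with its original leaves) yields a sub-tree $T_a'$. I set
$$B_0\ :=\ \bigcup_{a\in A}\bigl(\text{leaves of }T_a'\bigr).$$
Every descending word from $A$ to $B_0$ then uses each colour at most once, so $A\preceq B_0$ is elementary; and $B_0\leq B$ because each leaf of $T_a'$ is either a leaf of $T_a$ or admits leaves of $T_a$ as descendants.

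Next I prove maximality. Let $C$ satisfy $A\preceq C\leq B$. For each $c\in C$ with $a\in A$ as prefix, the elementary condition forces the descending word $a\to c$ to have no repeated colour, so $c$ lies in $T_a'$. To conclude $C\leq B_0$ it then suffices to produce, for each $b_0\in B_0$ over $a\in A$, a prefix of $b_0$ in $C$. Pick any $T_a$-leaf $b\in B$ having $b_0$ as prefix (or $b = b_0$ if $b_0\in B$), and let $c\in C$ be the unique prefix of $b$ in $C$, which exists because $C\leq B$. Then $c$ and $b_0$ both sit on the path from $a$ to $b$, so they are comparable. If $c$ were a strict descendant of $b_0$ the descending word $a\to c$ would factor through $b_0$ and, by definition of stop-node, use the colour applied at $b_0$ twice, contradicting that $A\preceq C$ is elementary. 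Hence $c$ is an ancestor of $b_0$, as required. Since $A\preceq B_0$, the inequality $A\leq C\leq B_0$ automatically upgrades to $C\preceq B_0$, and uniqueness is immediate from the maximality.

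The main obstacle I expect is ensuring that the tree construction is independent of the chosen expansion sequence: the $\Sigma_2$-relations allow colours to be swapped, and $B$ may be presented from $A$ in many ways. Showing that the multiset of colours occurring on any descending word from $a$ to a given descendant is intrinsic, and therefore that the notion of stop-node depends only on endpoints, is exactly where completeness of $\Sigma$ is used; this is the same invariance under $\Sigma$ already noted when the elementary property was defined.
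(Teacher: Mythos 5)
Your construction of $B_0$ by truncating each branch at the first colour repetition is a plausible explicit description of the object the paper produces, but the maximality argument has a genuine gap, and it sits exactly at the point you flag at the end --- only the problem is deeper than the intrinsic-ness of colour multisets can repair. In the presence of the $\Sigma_2$ relations the expansion $\{a\}\le B_a$ is not a tree: it has several tree presentations with different sets of intermediate vertices, and an element $c$ of an admissible $C$ with $\{a\}\le C_a\le B_a$ need not be a vertex of the particular tree $T_a$ you fixed. Concretely, in $2V$ take $B_a=\{a\alpha_1^i\alpha_2^j\mid i,j=1,2\}$ and $C_a=\{a\alpha_2^1,a\alpha_2^2\}$; if $T_a$ is the presentation that expands by colour $1$ first, its vertices are $a,a\alpha_1^1,a\alpha_1^2$ and the four leaves, so $a\alpha_2^1\in C$ is not a vertex of $T_a$ and the step ``$c$ lies in $T_a'$'' fails. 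Your next step also uses that two prefixes $c$ and $b_0$ of the same element $b$ are comparable because they ``sit on the path from $a$ to $b$''; this is false in general: $a\alpha_1^1$ and $a\alpha_2^1$ are both prefixes of $a\alpha_1^1\alpha_2^1=a\alpha_2^1\alpha_1^1$, but neither is a prefix of the other. Knowing that the multiset of colours along a descending word is presentation-independent does not recover either claim, because the intermediate vertices themselves are not intrinsic; for the same reason the well-definedness of your $B_0$ (and hence the uniqueness assertion) is not established.

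The paper avoids all of this by working at the level of the poset: it takes $\mathcal{E}(A)$, the maximal elementary descendant obtained by applying every descending operation exactly once to every element of $A$ (Lemma \ref{thm:unique-descendant}), and sets $B_0=\glb(\mathcal{E}(A),B)$, whose existence is quoted from \cite[Lemma 3.14]{desiconbrita2}. Maximality is then immediate: if $A\preceq C\leq B$ then $C\leq\mathcal{E}(A)$ and $C\leq B$, hence $C\leq B_0$, and $A\preceq B_0$ because sub-expansions of the elementary expansion $A\preceq\mathcal{E}(A)$ are elementary. To complete your argument you would essentially have to show that your truncated set is presentation-independent and equals this greatest lower bound, which amounts to re-proving the quoted glb result; as written the proof does not go through.
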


\begin{proof}
Let $\mathcal{E}(A)$ be as in the proof of Lemma \ref{thm:unique-descendant}.
Let $B_0=\glb(\mathcal{E}(A),B)$ which exists by \cite[Lemma 3.14]{desiconbrita2}. If $A\preceq C\leq B$, then $C\leq\mathcal{E}(A)$ and so $C\leq B_0$.
\end{proof}

\begin{lemma}\label{contractible}
For every $r$ and every valid, bounded and complete $\Sigma$, the Stein-space  $\mathcal{S}_r(\Sigma)$
is contractible.
\end{lemma}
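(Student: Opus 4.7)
The plan is to show that $\mathcal{S}_r(\Sigma)$ is a deformation retract of $|\mathcal{P}_r|$, which has already been noted in the excerpt to be contractible. Following the strategy of \cite{fluch++} for Brin's groups, I would use Lemma \ref{core} to set up a canonical core-insertion operation yielding the required collapse.

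For each simplex $\sigma = (Y_0 < Y_1 < \ldots < Y_k)$ of $|\mathcal{P}_r|$ not lying in $\mathcal{S}_r(\Sigma)$, the expansion $Y_0 \leq Y_k$ fails to be elementary, so Lemma \ref{core} produces a unique maximal intermediate basis $B_0 = B_0(Y_0, Y_k)$ with $Y_0 \prec B_0 \leq Y_k$. Since $Y_0 \not\preceq Y_k$, we have $B_0 < Y_k$ strictly, and $Y_0 < B_0$ by the lemma. I would then match $\sigma$ with the simplex obtained by toggling $B_0$: if $B_0 \notin \sigma$, pair $\sigma$ with $\sigma \cup \{B_0\}$; if $B_0 \in \sigma$, pair it with $\sigma \setminus \{B_0\}$. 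Since the endpoints $(Y_0, Y_k)$ of the chain are preserved under this toggle, and since $B_0$ depends only on that pair, the matching is well-defined and involutive.

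Once the matching is known to be acyclic, standard discrete Morse theory identifies the unmatched simplices, which are precisely those of $\mathcal{S}_r(\Sigma)$, as a deformation retract of $|\mathcal{P}_r|$, and contractibility of $\mathcal{S}_r(\Sigma)$ follows immediately.

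The main obstacle is verifying acyclicity of the matching. In \cite{fluch++} this is handled by refining the pairing with a secondary well-ordering on simplices, built from cardinalities and a lexicographic comparison of vertex sets, which strictly decreases along each matched step followed by a face inclusion, so that no cycle can arise in the modified Hasse diagram. Transferring that argument to the present generality requires checking it remains compatible with the broader class of elementary expansions allowed here, and this is where the completeness hypothesis on $\Sigma$ enters in an essential way: as noted right after the definition of elementary expansion, completeness guarantees that elementarity is preserved by the identifications in $\Sigma$, so that the notions of core and elementary descendant used throughout the matching are themselves well-defined.
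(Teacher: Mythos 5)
Your reduction of the lemma to showing that $\mathcal{S}_r(\Sigma)\hookrightarrow|\mathcal{P}_r|$ is a homotopy equivalence, with Lemma \ref{core} as the engine, is exactly the paper's strategy; but the matching you build breaks down one step before the acyclicity issue you flag. For a chain $\sigma=(Y_0<\dots<Y_k)$ the core $B_0=B_0(Y_0,Y_k)=\glb(\mathcal{E}(Y_0),Y_k)$ satisfies $Y_0<B_0<Y_k$, but it need \emph{not} be comparable with the interior vertices $Y_1,\dots,Y_{k-1}$, so $\sigma\cup\{B_0\}$ need not be a chain, i.e.\ need not be a simplex of $|\mathcal{P}_r|$. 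Already for a single colour of arity $2$, take $Y_0=\{a,b\}$, $Y_1=\{a\alpha^1\alpha^1,a\alpha^1\alpha^2,a\alpha^2,b\}$ and $Y_2=\{a\alpha^1\alpha^1,a\alpha^1\alpha^2,a\alpha^2,b\alpha^1,b\alpha^2\}$: then $Y_0\leq Y_2$ is not elementary, $B_0=\mathcal{E}(Y_0)=\{a\alpha^1,a\alpha^2,b\alpha^1,b\alpha^2\}$, and $B_0$ is incomparable with $Y_1$ (the leaf $b\in Y_1$ is a proper prefix of $b\alpha^1\in B_0$, while $a\alpha^1\in B_0$ is a proper prefix of $a\alpha^1\alpha^1\in Y_1$). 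So the proposed pairing is not even defined on all simplices outside $\mathcal{S}_r(\Sigma)$, and the critical cells would not be those of $\mathcal{S}_r(\Sigma)$. A secondary inaccuracy: \cite[Corollary 2.5]{fluch++} does not establish acyclicity of a matching via a secondary well-ordering; it does not use a discrete Morse matching for this step at all, so the argument you defer to is not there to be transferred.

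The argument the paper actually invokes sidesteps the comparability problem by never inserting $B_0$ into an arbitrary chain. First, for every non-elementary pair $A<B$ the order complex of the open interval $(A,B)$ is contractible: the poset map $C\mapsto\glb(C,\mathcal{E}(A))$ satisfies $C\geq\glb(C,\mathcal{E}(A))\leq B_0$ for all $C\in(A,B)$ (here one only ever forms greatest lower bounds, which are comparable by construction with the elements they are computed from), and such a map gives a conical contraction. Then $|\mathcal{P}_r|$ is built up from $\mathcal{S}_r(\Sigma)$ by attaching the closed intervals $|[A,B]|$ for non-elementary $A<B$ in increasing order of size; each is attached along $(\{A\}*|(A,B)|)\cup(|(A,B)|*\{B\})$, which is contractible because $|(A,B)|$ is, so the homotopy type is unchanged. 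If you insist on a genuine Morse matching, the inserted vertex must depend on more of the chain than its endpoints (for instance, inserting $\glb(\mathcal{E}(Y_0),Y_1)$ directly above $Y_0$), and then well-definedness of the involution and acyclicity both need fresh arguments rather than a citation. Your closing remark about completeness is correct but addresses a different point: completeness is what makes ``elementary'' well defined on $U_r(\Sigma)$, not what makes any matching acyclic.
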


\begin{proof} By \cite[Lemma 3.5]{desiconbrita2}, $|\mathcal{P}_r|$ is contractible. Now use the same argument of \cite[Corollary 2.5]{fluch++} to deduce that $\mathcal{S}_r(\Sigma)$ is homotopy equivalent to $|\mathcal{P}_r|$. Essentially, the idea is to use Lemma \ref{core} to show that each simplex in $|\mathcal{P}_r|$ can be pushed to a simplex in $\mathcal{S}_r(\Sigma)$.
\end{proof}

\begin{remark} Notice that  the action of  $V_r(\Sigma)$ on $\mathcal{P}_r$ induces an action of $V_r(\Sigma)$ on $\mathcal{S}_r(\Sigma)$ with finite stabilisers.
\end{remark}

\noindent Consider the Morse function $t(A)=|A|$ in $\mathcal{S}_r(\Sigma)$ and filter the complex with respect to $t$, i.e.

 $$\mathcal{S}_r(\Sigma)^{\leq n}:=\{A\in\mathcal{S}_r(\Sigma)\mid t(A)\leq n\}.$$
By the same argument as in \cite[Lemma 3.7]{desiconbrita2} $\mathcal{S}_r(\Sigma)^{\leq n}$ is finite modulo the action of $V_r(\Sigma)$. 
Let $\mathcal{S}_r(\Sigma)^{< n}=\{A\in\mathcal{S}_r(\Sigma)\mid t(A) <n\}.$ 



Provided that
 \begin{equation}\label{pair} \text{ the connectivity of the pair
 } (\mathcal{S}_r(\Sigma)^{\leq n},\mathcal{S}_r(\Sigma)^{<n})\text{ tends to $\infty$ as $n\to\infty$},\end{equation}
 Brown's Theorem \cite[Corollary 3.3]{brown2} implies that $V_r(\Sigma)$ is of type $\F_\infty,$ thus proving Theorem \ref{FPinfty}. The rest of this section is devoted to proving (\ref{pair}).

\subsection{Connectivity of descending links}
Recall that for any $A\in\mathcal{S}_r(\Sigma)$ the descending link $L(A):=\downlink^t(A)$ with respect to $t$ is defined to be the intersection of the link 
$\downlink(A)$ with $\mathcal{S}_r(\Sigma)^{<n}$,
where $t(A)=n$.
To show (\ref{pair}), we proceed as in \cite{fluch++}.
Using Morse theory, the problem is reduced to showing that for $A$ as before,
 the connectivity of $L(A)$ tends to $\infty$ when $t(A)=n\to\infty$. Whenever this happens, we will say that $L(A)$ is {\sl $n$-highly connected}. More generally: assume we have a family of complexes $(X_\alpha)_{\alpha\in\Lambda}$ together with a  map $n: (X_\alpha)_{\alpha\in\Lambda} \to \Z_{>0}$ such that the set $\{n(\alpha)_{\alpha\in\Lambda}\}$ is unbounded. Assume further that whenever $n(\alpha)\to\infty$,  the connectivity of the associated $X_\alpha$s tends to $\infty$. In this case we will say that the family is {$n$-highly connected}. 
 
Note that
$L(A)$ is the subcomplex of $\mathcal{S}_r(\Sigma)$ generated by $$\{B\mid B\prec A\text{ is an elementary expansion}\}.$$
Following \cite{fluch++}, define a height function $h$ for $B\in L(A)$ as follows: 
$$h(B):=(c_s,\ldots,c_2,b)$$
 where $b=|B|$ and $c_i$ ($i=2,\ldots,s$) is the number of leaves in $A$ whose length as descendants of their parent in $B$ is $i$. We order these heights lexicographically. Let $c(B)=(c_s,\ldots,c_2),$ which are also ordered lexicographically.
Denote by
$L_0(A)$ the subcomplex of $\mathcal{S}_r(\Sigma)$ generated by $\{B\mid B\sqsubset A\text{ is a very elementary expansion}\}.$
Then for any $B\in L(A)$, $B\in L_0(A)$ if and only if $h(B)=(0,\ldots,0,|B|)$. 

\begin{lemma}\label{first} The set of complexes of the form $L_0(A)$ is $t(A)$-highly connected.
\end{lemma}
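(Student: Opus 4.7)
The plan is to reinterpret $L_0(A)$ as a matching-type complex on the leaves of $A$ and then deduce its high connectivity in direct analogy with \cite[Section~3]{fluch++}.

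First, I would set up the combinatorial model. A vertex of $L_0(A)$ is an admissible $B$ with $B\sqsubset A$, which by the definition of a very elementary expansion means that each leaf of $B$ either is already a leaf of $A$ or has all of its children $\{b\alpha_i^1,\ldots,b\alpha_i^{n_i}\}$ sitting inside $A$. Thus $B$ corresponds bijectively to a non-empty family $\mathcal{C}$ of pairwise disjoint \emph{sibling bundles} in $A$, where a sibling bundle of colour $i$ is a subset of $A$ of the form $\{c\alpha_i^1,\ldots,c\alpha_i^{n_i}\}$. The partial order on $L_0(A)$ then corresponds to reverse inclusion of bundle families, so $L_0(A)$ is simplicially isomorphic to the complex $\mathcal{M}(A)$ whose vertices are sibling bundles in $A$ and whose simplices are pairwise disjoint families of such bundles.

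Next, I would quantify the abundance of sibling bundles inside $A$. Using Lemma~\ref{thm:unique-descendant} and the completeness of $\Sigma$, iterated elementary expansions of any fixed basis produce many candidate sibling bundles, and the number of disjoint bundles available in $A$ grows at least linearly in $|A|$. In particular, the vertex set of $\mathcal{M}(A)$ becomes arbitrarily large as $t(A)=|A|\to\infty$, giving a rich supply of independent matchings to combine into large simplices.

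Finally, I would invoke a connectivity bound for matching complexes. In the single-arity setting this is the classical result that the matching complex of a complete (hyper)graph on $n$ vertices has connectivity tending to infinity with $n$; for the present multi-arity situation I would cover $\mathcal{M}(A)$ by the subcomplexes $\mathcal{M}_i(A)$ consisting of matchings using only sibling bundles of colour $i$ and apply the nerve lemma, exploiting completeness of $\Sigma$ to pass between colours. The main obstacle I expect is coordinating the different arities $n_1,\ldots,n_s$ in this nerve argument, since the classical estimates are typically stated for a uniform arity; here completeness is the key hypothesis that permits the required colour-switching, and yields a connectivity bound for $L_0(A)$ that grows linearly with $t(A)$, as needed for the family $\{L_0(A)\}$ to be $t(A)$-highly connected.
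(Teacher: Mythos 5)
Your opening identification is in the right spirit---the paper's own proof also models $L_0(A)$ by a coloured matching complex on the $n=t(A)$ elements of $A$---but two of your steps go wrong. First, the vertex set. A very elementary contraction of $A$ applies $\lambda_i$ to an \emph{arbitrary} $n_i$-tuple of pairwise distinct elements of $A$; by the laws in $\Sigma_1$ the resulting element $c=(b_1,\ldots,b_{n_i})\lambda_i$ satisfies $c\alpha_i^j=b_j$, so \emph{every} $n_i$-element subset of $A$ occurs as a ``sibling bundle'' of some (not necessarily leaf) element $c$. There is therefore no scarcity to quantify, and your middle paragraph---producing bundles by iterated elementary expansions, Lemma \ref{thm:unique-descendant}, and completeness---addresses a non-issue. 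Worse, if you really mean only those bundles arising as genuine siblings in a fixed tree of descents from $X$, your complex $\mathcal{M}(A)$ is strictly smaller than $L_0(A)$ and the model is wrong. The correct model, as in the paper, is the complex $K_n$ whose vertices are \emph{all} colour-labelled subsets of an $n$-element set, a subset labelled $i$ having exactly $n_i$ elements, with simplices the pairwise disjoint families; this is (the barycentric subdivision of) $L_0(A)$.

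Second, and more seriously, the concluding nerve-lemma step fails. The monochromatic subcomplexes $\mathcal{M}_i(A)$ do not cover $\mathcal{M}(A)$: a simplex consisting of disjoint bundles of two different colours lies in no $\mathcal{M}_i(A)$, and the pairwise intersections $\mathcal{M}_i(A)\cap\mathcal{M}_j(A)$ are empty, so the nerve lemma gives no information about the union, let alone about $\mathcal{M}(A)$. Completeness of $\Sigma$ also plays no role here: very elementary expansions have paths of length at most $1$ and never see the relations in $\Sigma_2$, and $K_n$ is a purely combinatorial object depending only on $n$ and the arities $n_1,\ldots,n_s$. The genuinely missing ingredient is a direct connectivity argument for $K_n$; the paper obtains it by running the induction of \cite[Lemma 4.20]{brown2}, which handles the mixed arities in one stroke and yields connectivity growing linearly in $n$.
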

\begin{proof} For any $n\geq 0$, we define a complex denoted $K_n$ as follows. Start with a set $A$ with $n$ elements. 
The vertex set of $K_n$ consists of labelled subsets of $A$ where the possible labels are the colours $\{1,\ldots,s\},$ and where a subset labelled $i$ has precisely $n_i$ elements. Recall that $n_i$ is the arity of the colour $i$.
A $k$-simplex $\{\sigma_0,\ldots,\sigma_k\}$ in $K_n$ is given by  an unordered set of pairwise disjoint $\sigma_j$s. This complex is isomorphic to the barycentric subdivision of $L_0(A)$ for $n=t(A)$.
To prove that $K_n$ is $n$-highly connected,
proceed as in the proof of \cite[Lemma 4.20]{brown2}.
\end{proof}

Now consider descending links in $L(A)$  with respect to the height function $h$, i.e. for $B\in L(A)$ let 
$\downlink^h(B)$ be the subcomplex of $L(A)$ generated by $\{C\in L(A)\mid h(C)\leq
h(B) \mbox{ and either } B<C \mbox{ or } C>B\}.$
Consider the following two cases:
\begin{itemize}
\item[i)] $B\in L(A)\setminus L_0(A)$ and there is at least one leaf of $B$ that is expanded precisely once to obtain $A$.

\item[ii)] $B\in L(A)\setminus L_0(A)$ and no leaf of $B$ is expanded precisely once to obtain $A$.
\end{itemize}  

\noindent The next two Lemmas show that in either case $\downlink^h(B)$ is $t(A)$-highly connected. 

As in \cite{fluch++} the descending link $\downlink^h(B)$ of some $B\in L(A)$ with respect to $h$ can be viewed as the join of two subcomplexes, the down-link  and the up-link. The downlink consists of those elements $C$ such that $C < B$ and $h(C) \leq h(B)$. Hence, by the above $c(B)=c(C).$ The uplink consists of those $C$ that  $B< C$, $h(C) \leq h(B)$, and therefore $c(B) > c(C).$  

\begin{lemma}\label{second}
Let $B \in L(A)$ as in i). Then $\downlink^h(B)$ is contractible.
\end{lemma}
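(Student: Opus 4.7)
The plan is to decompose $\downlink^h(B) = D * U$ into its downlink (chains with all vertices $C_j < B$) and uplink (chains with all vertices $C_j > B$), and show that $U$ is contractible; since joining with a contractible complex is contractible, this implies the whole descending link is contractible. I adapt the template from \cite{fluch++}.

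Let $b \in B$ be the leaf expanded precisely once in $A$ via some color $i$, so $b\alpha_i^1,\ldots,b\alpha_i^{n_i}$ are all leaves of $A$. Form the simple expansion $B^+ := (B \setminus \{b\}) \cup \{b\alpha_i^1,\ldots,b\alpha_i^{n_i}\}$. Then $B \prec B^+ \preceq A$ is elementary, the new leaves contribute only to $c_1$ (outside the tracked range), so $c(B^+) = c(B)$ while $|B^+| > |B|$; thus $h(B^+) > h(B)$ and $B^+$ sits just outside $\downlink^h(B)$.

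The central construction is the poset map $\phi \colon U \to U$ defined by $\phi(C) := C \vee B^+$ in $\mathcal{P}_r$. This join exists by boundedness of $\Sigma$ (using $B$ as a common lower bound of $C$ and $B^+$), is elementary over $A$ by completeness of $\Sigma$ (so $\phi(C) \in L(A)$), and since the only new leaves relative to $C$ are depth-$0$ leaves of $A$, $c(\phi(C)) = c(C) < c(B)$, whence $\phi(C) \in U$. The map $\phi$ is idempotent and monotone with $\phi \geq \mathrm{id}$, so the natural transformation $\mathrm{id} \Rightarrow \phi$ shows that the subcomplex $\phi(U) = \{C \in U \colon C \geq B^+\}$ is a deformation retract of $U$.

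Finally, $\phi(U)$ is naturally identified with the uplink of $B^+$ in $L(A)$ subject to the same height ceiling $h(B) = h(B^+)$ in the $c$-coordinates. Since $B^+$ has strictly fewer depth-$1$ leaves than $B$, an induction on the number of depth-$1$ leaves eventually brings us to the situation of Case ii) (note that the depth-$\geq 2$ leaves persist because $B \notin L_0(A)$, so the base case cannot degenerate to $A$). The main obstacle will be to extract contractibility of the uplink factor alone from the next lemma, which a priori yields contractibility only of the full descending link, i.e.\ the join of downlink and uplink; this requires either a careful analysis of the downlink of each intermediate $B^{+\cdots+}$ at the base case, or a variant of the inductive argument that preserves the uplink-only statement throughout, with completeness of $\Sigma$ providing the commutativity of expansions at different leaves needed at each stage.
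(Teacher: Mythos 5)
Your overall strategy (split $\downlink^h(B)$ as the join of its down-link and up-link and contract the up-link) is the right one, but the contraction you propose has a genuine defect: the map $\phi(C)=C\vee B^+$ need not land in $L(A)$, because it can equal $A$ itself, and $A$ is not a vertex of the descending link. Concretely, in $2V$ take $B=\{b,b'\}$ and let $A$ be obtained by expanding $b$ once with colour $1$ and $b'$ by colour $1$ followed by colour $2$. Then $C=\{b\}\cup\{b'\alpha_1^j\alpha_2^k\}_{j,k}$ lies in the up-link: $C>B$, $C\prec A$ is elementary, and $c(C)=(0)<(4)=c(B)$. Yet $C\vee B^+=A$. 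Nothing in your argument excludes this: whenever $C$ already realises every expansion of $A$ except the one at $b$, adjoining $B^+$ completes $C$ to all of $A$, so $\phi$ is simply not a self-map of $U$. This is precisely why the paper's proof runs in the opposite direction: for $C$ in the up-link it forms $C_0\leq C$ by \emph{deleting} the expansion at $b$ (rather than adjoining it), and compares with $M$, the maximal elementary expansion of $B$ below $A$ that \emph{preserves} the leaf $b$ (a variant of Lemma~\ref{core}). Since $b$ stays unexpanded in $M$, automatically $M\neq A$ and $c(M)<c(B)$, so $M$ is a legitimate vertex of the up-link, and the zigzag $C\geq C_0\leq M$ with $M$ fixed cones off the up-link in a single step.

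Even setting that aside, your reduction does not prove the statement as claimed. Your induction terminates in case ii), where Lemma~\ref{third} yields only that the relevant up-link is highly connected, not contractible; so at best you would conclude high connectivity of $\downlink^h(B)$, whereas the lemma asserts contractibility. You flag this yourself as ``the main obstacle'', but it is not a loose end that careful bookkeeping will close: it is a sign that the reduction is oriented the wrong way. No induction on the number of depth-one leaves is needed at all -- one fixed cone point $M$ as above contracts the whole up-link, and joining with the down-link then gives contractibility of $\downlink^h(B)$.
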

\begin{proof} It suffices to follow the proof of \cite[Lemma 3.7]{fluch++}. We briefly sketch this proof using our notation: let $b\in B$ be a leaf that is expanded precisely once to obtain $A$. 
Let $B\prec M\preceq A$ be the maximal elementary expansion of $B$ that preserves the leaf $b$ and lies over $A$. The existence of $M$ follows from a variation of Lemma \ref{core}. Now, for any $C\in\downlink^h(B)$ lying in the uplink we let $B\prec C_0\sqsubseteq C$, where $C_0$ is obtained by  performing all expansions in $B$ needed to get $C$, except the one of $b$. 

One easily checks that $C_0\leq M$, that $C_0$ and $M$ lie in $\downlink^h(B)$ and that both $C_0$
and $M$ lie in the uplink.  Hence $M\geq C_0\leq C$ provides a contraction of the uplink. 
As $\downlink^h(B)$ is the join of the downlink and the uplink we get the result.
\end{proof}

\begin{lemma}\label{third} Let $B$ be as in ii). Then $\downlink^h(B)$ is $t(A)$-highly connected.
\end{lemma}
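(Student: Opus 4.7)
The plan is to mimic the strategy used for case (i) in Lemma~\ref{second}, writing
\[
\downlink^h(B) \;=\; D(B)\ast U(B)
\]
as the join of the downlink $D(B)$ (elements $C<B$ with $h(C)\leq h(B)$) and the uplink $U(B)$ (elements $C>B$ with $h(C)\leq h(B)$). By the standard join estimate $\operatorname{conn}(X\ast Y)\geq \operatorname{conn}(X)+\operatorname{conn}(Y)+2$, it is enough to bound the connectivity of each factor in terms of $t(A)$. Set $B_0:=B\cap A$ and $B^*:=B\setminus B_0$; since in case (ii) each element of $B^*$ has at least two $A$-leaves below it, we have $|B_0|+2|B^*|\leq t(A)$, so $\max(|B_0|,|B^*|)\geq t(A)/3\to\infty$ with $t(A)$.

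For the downlink, I would observe that a contraction $C<B$ preserves $c(B)$ if and only if every contracted leaf of $B$ lies in $B_0$: contracting any leaf of $B^*$ pushes its $A$-descendants one level deeper and strictly increases some $c_i$ in the lexicographic order. The surviving subcomplex of $D(B)$ has exactly the combinatorial structure of the complex $K_{|B_0|}$ from the proof of Lemma~\ref{first}, and the same argument shows it is $|B_0|$-highly connected (with the convention that $D(B)=\emptyset$ when $B_0=\emptyset$, in which case $\downlink^h(B)=U(B)$).

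For the uplink, I would use a deformation-retract argument in the spirit of Lemma~\ref{second}. No expansion of a stationary leaf of $B_0$ can lie in $L(A)$, so every $C\in U(B)$ strictly expands only leaves in $B^*$; let $T(C)\subseteq B^*$ be that set. I would define $\phi(C)$ to be the smallest elementary expansion of $B$ below $C$ that touches exactly the leaves of $T(C)$; existence and uniqueness of this infimum follow from the bounded hypothesis together with Lemma~\ref{core} applied downwards from $C$. One then checks that $\phi$ is order-preserving, idempotent, and has image contained in $U(B)$, so it realises a deformation retract of $U(B)$ onto its fixed subcomplex $U_0(B)$. Finally I would identify $U_0(B)$ with a labelled-subsets complex on $B^*$ of the type appearing in Lemma~\ref{first}, and reapply that lemma to conclude that $U_0(B)$, and hence $U(B)$, is $|B^*|$-highly connected. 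Combining the two estimates with $\max(|B_0|,|B^*|)\geq t(A)/3$ and the join inequality gives $\operatorname{conn}(\downlink^h(B))\to\infty$ with $t(A)$.

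The principal obstacle I anticipate is the careful construction of the retraction $\phi$ when, for some $b\in B^*$, the expansion tree from $b$ down to $A$ contains a full $n_i\times n_{i'}$ subgrid. There the commutation relations in $\Sigma_2$ make several distinct ``first'' simple expansions of $b$ legitimate, and some care is required, using completeness of $\Sigma$, to verify that the notion ``smallest expansion below $C$ touching exactly $T(C)$'' is unambiguous and that the resulting $\phi$ is order-preserving and respects $L(A)$. This is the same technical point handled for case (i) in Lemma~\ref{second}, but without the convenient single unexpanded leaf $b$ to anchor the construction.
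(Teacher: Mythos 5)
Your overall architecture --- writing $\downlink^h(B)$ as the join of the down-link and the up-link, identifying the down-link with the complex $K_{|B_0|}$ of Lemma \ref{first} (your characterisation that a contraction stays in the descending link iff it only merges leaves of $B_0=B\cap A$ is correct), and feeding both factors into the join connectivity estimate --- is exactly the route the paper takes, following Lemma 3.8 of \cite{fluch++}. However, two steps as written do not go through.

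First, your counting inequality points the wrong way. From ``each element of $B^*$ has at least two $A$-leaves below it'' you correctly get $|B_0|+2|B^*|\leq t(A)$, but this is an \emph{upper} bound on $|B_0|+2|B^*|$ and implies nothing like $\max(|B_0|,|B^*|)\geq t(A)/3$ (take $|B_0|=|B^*|=1$ and $t(A)$ huge). What you need is the reverse bound: since $B\prec A$ is elementary, each leaf of $B^*$ has \emph{at most} $n_1\cdots n_s$ descendants in $A$, so $t(A)\leq |B_0|+n_1\cdots n_s\,|B^*|$, whence $\max(|B_0|,|B^*|)\geq t(A)/(1+n_1\cdots n_s)\to\infty$. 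This is precisely the inequality $n\leq k_b n_1\ldots n_s+k_s$ used in the paper.

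Second, the up-link retraction $\phi$ does not exist as described, and the obstacle you flag at the end is fatal rather than technical. An element $C$ of the up-link may realise the \emph{full} elementary expansion $A_b$ below some $b\in T(C)$ (only the global equality $C=A$ is excluded), and then, by completeness of $\Sigma$, both $\{b\}\alpha_i$ and $\{b\}\alpha_j$ are minimal expansions of $\{b\}$ under $A_b$ for the first two colours $i,j$ on $b$'s path; there is no ``smallest expansion below $C$ touching exactly $T(C)$''. More seriously, the up-link is in general \emph{not} homotopy equivalent to the discrete-label complex $U_0(B)$ you aim for: for a single $b\in B^*$ whose elementary path to $A$ uses three colours, the open interval $(\{b\},A_b)$ is connected (e.g.\ $\{b\}\alpha_i<\{b\}\alpha_i\alpha_j>\{b\}\alpha_j$), hence cannot retract onto the three isolated very elementary expansions. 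The argument that does work (and is the one behind \cite[Lemma 3.8]{fluch++}) needs no retraction: since only leaves of $B^*$ may be expanded, the up-link is the join, over $b\in B^*$, of the order complexes of the open intervals $(\{b\},A_b)$; each of these is nonempty (indeed spherical of dimension depending on the length of $b$'s path), so the join is at least $(|B^*|-2)$-connected, which is all that is required once the corrected inequality above forces $\max(|B_0|,|B^*|)\to\infty$.
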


\begin{proof} As before, we follow the proof of \cite[Lemma 3.8]{fluch++} with only minor changes. With our notation, we let $k_s$ be the number of leaves on $B$ that are also leaves of $A$ and let $k_b$ be the remaining leaves. Then one checks that the the up-link in $\downlink^h(B)$ is $k_b$-highly connected and that the down-link is $k_s$-highly connected. As $t(A)=n\leq k_bn_1\ldots n_s+k_s$, we get the result.
\end{proof}

\noindent Finally, using Morse theory as in \cite{fluch++}, we deduce that the pair $(L(A),L_0(A))$ is $t(A)$-highly connected. As a result, $L(A)$ is also $t(A)$-highly connected, establishing (\ref{pair}) and hence Theorem \ref{FPinfty}.

\medskip\noindent Some time after a preprint of this work was posted, we learned of Thumann's work \cite{thumann1, thumann2}, where he provides a generalised framework of groups defined by operads to apply the techniques introduced in \cite{fluch++}.  We believe that automorphism groups of valid, bounded and complete Cantor algebras might be obtained making a suitable choice of cube cutting operads, see \cite[Subsection 4.2]{thumann1}. Therefore Theorem 4.1 could also be seen as a special case of \cite[Subsection 10.2]{thumann2}.

\section{Finiteness conditions for centralisers of finite subgroups}\label{centralisersection}

\noindent From now on, unless mentioned otherwise, we assume that the Cantor-algebra $U_r(\Sigma)$ is valid and bounded.

\begin{definition} Let $L$ be a finite group. The set of bases in $U_r(\Sigma)$ together with the expansion maps can be viewed as a directed graph. Let $(U_r(\Sigma),L)$ be the following diagram of groups associated to this graph: To each basis $A$ we associate $\text{Maps}(A,L)$,
the set of all maps from $A$ to $L$.
Each simple expansion $A\leq B$ corresponds to the diagonal map $\delta:\text{Maps}(A,L)\to\text{Maps}(B,L)$ with $\delta(f)(a\alpha_i^j)=f(a)$, where $a\in A$ is the expanded leaf, i.e.  
$B=(A\setminus\{a\})\cup\{a\alpha_i^1,\ldots, a\alpha_i^{n_i}\}$ 
for some colour $i$ of arity $n_i$. 
To arbitrary expansions we associate the composition of the corresponding diagonal maps.

\end{definition}

Centralisers of finite subgroups in $V_r(\Sigma)$ have been described in \cite[Theorem 4.4]{britaconcha} 
and also in \cite[Theorem 1.1]{matuccietc} for the Higman-Thompson groups $V_{n,r}$. This last description is more explicit and makes use of the action of $V_{n,r}$ on the Cantor set (see Remark \ref{topology} below).
 
We will use the following notation, which was  used in \cite{britaconcha}:
let $Q\leq V_r(\Sigma)$ be a finite subgroup and let $t$ be the number of transitive permutation representations $\varphi_i: Q\to S_{m_i}$ of $Q$. Here, $1\leq i\leq t$, $m_i$ is the orbit length and $S_{m_i}$ is the symmetric group of degree $m_i$. Also let  $L_i=C_{S_{m_i}}(\varphi_i(Q))$. 

\noindent There is a basis $Y$ setwise fixed by $Q$ and which is of  minimal cardinality. The group $Q$ acts on $Y$ by permutations.  Thus there exist integers 
$0\leq r_1,\ldots,r_t\leq d$ 
such that 
$Y=\bigcup_{i=1}^t W_i$ with $W_i$  the union of exactly $r_i$ $Q$-orbits 
of type $\varphi_i$. See Remark \ref{adm-basis} for the definition of $d$.

The next result combines the descriptions in \cite[Theorem 4.4]{britaconcha}
and \cite[Theorem 1.1]{matuccietc} giving a more detailed description of the centralisers of finite subgroups in $V_r(\Sigma)$.

\begin{theorem}\label{centraliser} Let $Q$ be a finite subgroup of $V_r(\Sigma).$ Then
$$C_{V_r(\Sigma)}(Q)=\prod_{i=1}^tG_i$$ where  
$G_i=K_i\rtimes V_{r_i}(\Sigma)$ and $K_i=\colim(U_{r_i}(\Sigma),L_i)$. Here, $V_r(\Sigma)$ acts on $K_i$ as follows: let $g\in V_{r_i}(\Sigma)$ and let $A$ be a basis in $U_{r_i}(\Sigma)$. The action of $g$ on $K_i$ is induced, in the colimit, by the map $\text{Maps}(A,L)\to\text{Maps}(gA,L)$ obtained contravariantly from $gA\buildrel g^{-1}\over\to A.$ 
\end{theorem}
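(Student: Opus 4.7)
The plan is to refine the extension already established in \cite[Theorem 4.4]{britaconcha},
$$K \mono C_{V_r(\Sigma)}(Q) \epi V_{r_1}(\Sigma) \times \cdots \times V_{r_t}(\Sigma),$$
where $K$ is locally finite, by doing three things: (a) identifying $K$ as the direct product $\prod_i K_i$ with $K_i = \colim(U_{r_i}(\Sigma), L_i)$, (b) splitting the extension over each factor to get $G_i = K_i \rtimes V_{r_i}(\Sigma)$, and (c) assembling these into a global direct product $C_{V_r(\Sigma)}(Q) = \prod_i G_i$.

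First, I would fix the $Q$-invariant basis $Y = \bigcup_{i=1}^t W_i$ of minimal cardinality, with $W_i$ consisting of exactly $r_i$ $Q$-orbits of type $\varphi_i$. Any $g \in C_{V_r(\Sigma)}(Q)$ permutes $Q$-orbits within each type, so by taking a sufficiently fine common $Q$-invariant expansion (using boundedness and \cite[Lemma 2.8]{britaconcha}) it decomposes as a commuting product of automorphisms supported on the descendants of the $W_i$. The splitting $\iota_i : V_{r_i}(\Sigma) \hookrightarrow C_{V_r(\Sigma)}(Q)$ is then given by the rule that blows up an admissible subset $A \subseteq U_{r_i}(\Sigma)$ to the $Q$-invariant admissible subset $A^* \subseteq U_r(\Sigma)$ formed by $|A|$ copies of the standard $\varphi_i$-orbit, and lifts a bijection $A \to A'$ to the $Q$-equivariant bijection $A^* \to (A')^*$ matching orbits via the identity element of $L_i$. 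Because the $W_i$ span disjoint sub-algebras, their supports never overlap, the factors commute, and (b) together with (c) follow.

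For (a), an element of the kernel $K_i$ is a centralising automorphism whose restriction to the type-$i$ part preserves every $Q$-orbit setwise after some $Q$-invariant refinement. On each such orbit it must act by a permutation commuting with $\varphi_i(Q)$, hence by an element of $L_i$; and refining an orbit into $n_j$ sub-orbits replicates the same $L_i$-element on each piece, which is precisely the diagonal map in the diagram $(U_{r_i}(\Sigma), L_i)$. Taking the colimit over all admissible subsets of $U_{r_i}(\Sigma)$ yields $K_i = \colim(U_{r_i}(\Sigma), L_i)$. Unwinding the conjugation action of $g \in V_{r_i}(\Sigma)$ on a map $f : A \to L_i$ yields the composition $gA \to A \to L_i$, which is the contravariant action described in the statement.

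The main obstacle is the bookkeeping needed to make all these identifications canonical: one must fix a reference copy of the standard $\varphi_i$-orbit once and for all so that the splitting $\iota_i$ and the colimit identification of $K_i$ are genuine homomorphisms rather than set-theoretic bijections up to choices, and one must verify that no unforeseen collapsing occurs in the colimit. This last point is the content of the assertion $L_i = C_{S_{m_i}}(\varphi_i(Q))$: the only relations imposed on the on-orbit permutations are those coming from the diagonal maps, because nothing else centralises $\varphi_i(Q)$ on a single orbit. Once these identifications are pinned down, statements (a), (b), (c) combine directly into the theorem.
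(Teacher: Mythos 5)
Your proposal is correct and follows essentially the same route as the paper: the paper simply cites \cite[Theorem 4.4]{britaconcha} for the direct product of semidirect products (your steps (b) and (c)) and devotes its proof to exactly your step (a) — representing a kernel element on a sufficiently refined $Q$-invariant basis by an $L_i$-valued function on the orbit set, checking compatibility with the diagonal maps under expansion, and unwinding conjugation to get the contravariant action via $gA\buildrel g^{-1}\over\to A$. The bookkeeping you flag (fixing marked elements in each orbit to pin down the identification with $\varphi_i$ and make the splitting canonical) is precisely how the paper handles it.
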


\begin{proof} The decomposition of $C_{V_r(\Sigma)}(Q)$ into a finite direct product of semidirect products was shown in \cite[Theorem 4.4]{britaconcha}.  Hence, for the first claim, all that remains to be  checked is that $K_i=\colim (U_{r_i}(\Sigma),L_i)$. We use the same notation as in the proof of \cite[Theorem 4.4]{britaconcha}.

 Fix $\varphi=\varphi_i$, $l:=r_i$, $L:=L_i$, $m:=m_i$ and $K:=K_i=\ker \tau.$   Let $x\in K= \ker \tau$, where $\tau:  C_{V_r(\Sigma)}(Q) \epi V_l(\Sigma)$ is the split surjection of the proof of \cite[Theorem 4.4]{britaconcha}.  With $Y$ as above,  there is a basis $Y_1\geq Y$ with $xY_1=Y_1$ and $Y_1$ is also $Q$-invariant. Then the basis $Y_1$ decomposes as a union of $l$ $Q$-orbits (all of them of type $\varphi$), and $x$ fixes these orbits setwise. We denote these orbits by $\{C_1,\ldots,C_l\}$. In each of the $C_j$ 
 there is a marked element. Since $\varphi$ is transitive this can be used to fix a bijection $C_j\to\{1,\ldots,m\}$ corresponding to $\varphi$. Then the action of $x$ on $C_j$ yields a well defined $l_j\in L$. This means that we may represent $x$ as $(l_j)_{1\leq j \leq l}$. Let $A$ be the basis of $U_l(\Sigma)$ obtained from $Y_1$ by identifying all elements in the same $Q$-orbit, i.e. $A=\tau^{\frak U}(Y_1)$ with the notation of \cite{britaconcha}. Denote $A=\{a_1,\ldots,a_l\}$ with $a_j$ coming from $C_j$. Then the element $x$ described before can be viewed as the map $x:A\to L$ with $x(a_j)=l_j$. Suppose we chose a different basis $Y_2$ fixed by $x$. It is a straightforward check to see that there is a basis $Y_3$ also fixed by $x$, such that  $Y_1,Y_2\leq Y_3$, and that this representation is compatible with the associated expansion maps.

To prove the second claim, consider an element $g\in V_l(\Sigma)$ viewed as an element in  $C_{V_r(\Sigma)}(Q)$ using the splitting $\tau$ above. This means that $g$ maps $Q$-fixed bases to $Q$-fixed bases 
and that $g$ preserves the set of marked elements.
Let $ Y_1,A $ and $x\in K$ be as above. Then the basis $gY_1$ is the union of the $Q$-orbits $\{gC_1,\ldots,gC_l\}$ and $\tau^{\frak U}(gY_1)=gA.$ Also, for any $c_i\in C_i$, $gxg^{-1}gc_i=gxc_i$ which means that if the action of $x$ on $C_i$ is given by $l_i\in L$, then the action of $x^g$ on $gC_i$ is given also by $l_i$. Therefore the map $gA\to L$ which represents $x^g$ is the composition of the maps $g^{-1}:gA\to A$ and the map $A\to L$ which represents $x$.
\end{proof}

\begin{remark}\label{topology} In \cite{matuccietc}, where the ordinary Higman-Thompson group $V_r(\Sigma)=V_{n,r}$ is considered, the subgroups $K_i$ are described as $\text{Map}^0(\mathfrak{C},L)$,
where $\mathfrak{C}$ denotes the Cantor set, and $\text{Map}^0$ the set of continuous maps. 
Here the Cantor set is viewed as the set of right infinite words in the descending operations.

It is a straightforward  check to see that both descriptions are equivalent in this case. In fact $x:A\to L$ corresponds to the element in $\text{Map}^0(\mathfrak{C},L)$ mapping each $\varsigma\in\mathfrak{C}$ to $x(a)$ for the only $a\in A$ which is a prefix of $\varsigma$. 
Similarly, one can describe $K_i$ when  $V_{r_i}(\Sigma)=sV$ is a Brin-group, using the fact that these groups act on $\mathfrak{C}^s$, see \cite{warrenconcha}.
\end{remark}

\begin{notation}  Let
$$\Omega:=\{B(\L)\mid B\subset\mathcal{L}\text{ finite}\}\cup\{\emptyset\}.$$

We also denote
$$\Omega^n:=\Omega\times\buildrel n\over\ldots\times\Omega=\{(\omega_1,\ldots,\omega_n)\mid\omega_i\in\Omega\},$$
$$\Omega^n_{c}:=\{(\omega_1,\ldots,\omega_n)\in\Omega^n\mid\cup_{i=1}^n\omega_i=\L\}.$$
\end{notation}

\begin{lemma}\label{properties}
\begin{itemize}
\item[i)]  Let $B \geq A \geq X$ be bases and $B_1\subseteq B$. Let $A_1:=\{a\in A\mid a\text{ is a prefix of an element in }B_1\}$. Then $A_1(\L)=B_1(\L)$.

\item[ii)] Let $A\geq X$ be a basis, then $A(\L)=\L$.

\item[iii)] For any $(\omega_1,\ldots,\omega_n)\in\Omega^n$ there is some basis $A$ with $X\leq A$ and some $A_i\subseteq A$, $1\leq i\leq n$ such that $\omega_i=A_i(\L)$.

\item[iv)] Let $A\geq X$ be a basis, $A_1,A_2\subseteq A$ and $\omega_i=A_i(\L)$ for $i=1,2$. Then $\omega_1=\omega_2$ if and only if $A_1=A_2$.

\item[v)] Let $A,B \geq X$ be two bases and $\omega\in\Omega$ be such that for some $A_1\subseteq A$, $B_1\subseteq B$ we have $\omega=A_1(\L)=B_1(\L)$. Then $|A_1|\equiv |B_1|$ mod $d$ and $|A_1|=0$ if and only if $|B_1|=0.$

\item[vi)] Let $A,B \geq X$ be two bases and $A_1,A_2\subseteq A$, $B_1,B_2\subseteq B$ with $A_1(\L)=B_1(\L)$ and $A_2(\L)=B_2(\L)$. Then $A_1\cap A_2=\emptyset$ if and only if $B_1\cap B_2=\emptyset$.
\end{itemize}
\end{lemma}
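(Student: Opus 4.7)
The plan is to treat $B(\L)\in\Omega$ as encoding the basic clopen ``cone'' generated by the leaves in $B$, so that two different encodings represent the same element of $\Omega$ precisely when their underlying cones coincide. In this picture, (ii)--(iii) are existence/surjectivity of the encoding, (iv) is injectivity within a fixed basis, (i) describes how the encoding transforms under refinement, and (v)--(vi) record that cardinality mod $d$, emptiness, and disjointness are intrinsic to the clopen set rather than to the encoding.

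The cornerstone is (iv), which I would prove by contradiction. If $a\in A_1\setminus A_2$ then $a\in a(\L)\subseteq A_1(\L)=A_2(\L)$, so $a$ shares a common descendant with some $a'\in A_2$; validity of $\Sigma$ says distinct elements of a basis $A$ have disjoint cones, forcing $a=a'\in A_2$, a contradiction. Statement (ii) is the instance $A_1=A$ together with the same observation: every leaf $l\in\L$ must meet the cone of some basis element, so $l\in A(\L)$.

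For (i), every $b\in B_1$ has a unique prefix in $A$ by Remark \ref{tec}, which by construction lies in $A_1$; conversely every $a\in A_1$ has at least one descendant in $B_1$. The inclusion $B_1(\L)\subseteq A_1(\L)$ is immediate by concatenating common-descendant witnesses with the prefix relations. The reverse inclusion requires analysing how the cone of $a\in A_1$ decomposes through the refinement $A\leq B$; one then traces a common descendant of a leaf $l$ with $a$ through to a common descendant with a suitable element of $B_1$. Statement (iii) follows by boundedness: given $\omega_i=C_i(\L)$ with $C_i\subseteq\L$ finite, use \cite[Lemma 2.8]{britaconcha} to take a basis $A\geq X$ which is a common upper bound of bases containing each $C_i$, and set $A_i:=\{a\in A\mid a\text{ is a descendant of some }c\in C_i\}$; then $A_i(\L)=\omega_i$ by (i).

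Parts (v) and (vi) I would reduce to (i) and (iv) by passing to a common upper bound $C\geq A,B$, which exists by boundedness. Applying (i) to both $A\leq C$ and $B\leq C$ produces subsets $C_1',C_1''\subseteq C$ representing the same $\omega$; by (iv), $C_1'=C_1''$. Since $C_1'$ is obtained from $A_1$ by a sequence of simple expansions, each altering the cardinality by some $n_i-1$, we obtain $|C_1'|\equiv|A_1|\pmod d$, and similarly $|C_1'|\equiv|B_1|\pmod d$, proving (v); emptiness is the special case $C_1'=\emptyset$. For (vi) the same reduction identifies both pairs $(A_1,A_2),(B_1,B_2)$ with a single pair in $C$, so disjointness in either $A$ or $B$ is equivalent to disjointness in $C$. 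The main technical obstacle I anticipate is the reverse inclusion in (i), where one must check that a common descendant of a leaf and an element $a\in A_1$ can be moved, using the $\Sigma_2$-identities and the refinement $A\leq B$, to yield a common descendant with an actual element of $B_1$; this is where the compatibility built into the definition of $A_1$ gets really used.
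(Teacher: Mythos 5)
Your overall architecture matches the paper's: (iv) via uniqueness of prefixes (Remark \ref{tec}), (iii) via a common upper bound and (i), and (v), (vi) by passing to a common refinement and invoking (i) and (iv); the mod-$d$ counting in (v) is exactly the intended argument. The one place where your proposal has a genuine gap is the one you yourself flag: the inclusion $A_1(\L)\subseteq B_1(\L)$ in part (i) is described but not proved, and since (ii), (iii) and (v) are all derived from (i), this step is load-bearing. The paper closes it as follows. Reduce to a simple expansion with $A_1=\{a\}$ and $B_1=\{a\alpha_i^1,\dots,a\alpha_i^{n_i}\}$ the full set of children of $a$. Given $u\in a(\L)$, write $uv=ac$ for descending words $v,c$; performing $c$ on $A$ yields a basis $C$ containing $ac$, and boundedness gives a common upper bound $D\geq C,B$. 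Some $d=acc'\in D$ is a descendant of $ac$, and by Remark \ref{tec} its unique prefix in $B$ must be one of the $b_j=a\alpha_i^j$ (any other element of $B$ lies in $A\setminus\{a\}$ and would give $d$ a second $A$-prefix). Hence $uvc'=acc'=b_jb'$ and $u\in b_j(\L)$. No $\Sigma_2$-manipulation is needed; the whole content is the existence of common upper bounds plus uniqueness of prefixes.

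A related point your sketch glosses over, and which surfaces as soon as you attempt the reverse inclusion: it genuinely requires that $B_1$ contain \emph{every} $B$-descendant of each $a\in A_1$ (which is how the lemma is actually applied, e.g.\ in Lemma \ref{tec1}). If $B_1$ were, say, a single child $a\alpha_i^1$ of $a$, then $A_1=\{a\}$ but $a\alpha_i^2\in a(\L)\setminus B_1(\L)$, so the compatibility you hope to extract from the mere definition of $A_1$ as a set of prefixes is not enough; one must reduce to full fibres first. Your proofs of (ii) and (iv) are fine, and your derivations of (v) and (vi) go through once (i) is established in this form.
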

\begin{proof} It suffices to prove i) in the case when $B$ is obtained by a simple expansion from $A$. Moreover, we may assume that $A_1=\{a\}$ and $B_1=\{a\alpha_i^1,\ldots,a\alpha_i^{n_i}\}$ for some colour $i$ of arity $n_i$. Then obviously $B_1(\L)\subseteq a(\L).$ Denote $b_j=a\alpha_i^j$ and let $u\in a(\L)$. Then $uv=ac$ for descending words $v$ and $c$. Performing the descending operations given by $c$ on the basis $A$, we obtain a  basis $C$ with $ac\in C$. Let $D$ be a basis with $C,B\leq D$. Then there is some element $d\in D$ which can be written as $d=acc'$ for some descending word $c'$. Moreover, Remark \ref{tec} also implies that $d=b_jb'$ for some $j$ and descending word $b'$. As $uvc'=acc'=b_jb'$ we get $u\in b_j(\L)$.

\noindent  Now ii) follows from i).

\noindent To prove iii), suppose that  $\omega_i=\{a_i^1,\ldots,a_i^{l_i}\}(\L)$.  For each $a_i^j$ we may find a basis $T_i^j\geq X$ containing $a_i^j$. Now let $A$ be common descendant of the $T_i^j$ and use  i).

\noindent  To establish  iv), it suffices to check that if $\widehat a\in A,$ $\widehat a\not\in A_i$, then 
$\widehat a\not\in A_i(\L)$. Suppose $\widehat a\in A_i(\L).$  Then there are  descending words $v,u$ and some $a\in A_i$, such that $\widehat av=au=b$. Performing the descending operations given by $v$ and $u$ on $\widehat a$ and $a$ respectively, we get a basis $A\leq B$ and $b \in B$  contradicting Remark \ref{tec}.

\noindent In v), since there is a basis $C$ with $A,B\leq C$, we may assume $A\leq B.$ Then  v) is a consequence of i) and iv).

\noindent Finally, for vi) we may also assume $A\leq B$. Then we only have to use Remark \ref{tec}.
\end{proof}

\begin{notation} Let $\omega\in\Omega$, $X\leq A$ and $B\subseteq A$ such that $\omega=B(\L)$.
We  put
$$\|\omega\|=\Bigg\{\begin{aligned}
&0\text{ if }\omega=\emptyset\\
&t \text{ for }|B|\equiv t\text{ mod }d\text{ and }0<t\leq d\text{ otherwise.}\\
\end{aligned}$$
This is well defined by Lemma \ref{properties} v).
Take $B'\subseteq A$ and $\omega'=B'(\L)$. If $B\cap B'=\emptyset$, we put $\omega\wedge\omega'=\emptyset$. Note that by Lemma \ref{properties} vi) this  is well defined.

Finally, let
$$\Omega^n_{c,\text{dis}}:=\{(\omega_1,\ldots,\omega_n)\in\Omega^n_c\mid \L=\bigcup_{i=1}^n\omega_i\text{ and }\omega_i\wedge\omega_j=\emptyset\text{ for }i\neq j
\}.$$

\end{notation}

The group $V_r(\Sigma)$ does not act on the set of leaves. It does, however, act on $\Omega$ as we will see in Lemma \ref{tec1}. Nevertheless, if $l$ is a leaf such that $l\in A$ for a certain basis $A \geq X$ and $g$ is a group element such that $gA \geq X$, then
we will denote by $gl$ the leaf of $gA$ to which $l$ is mapped by $g$.

\begin{lemma}\label{tec1} The group $V_r(\Sigma)$ acts by permutations on $\Omega$ and on $\Omega^n_{c,\text{dis}}$.  There are only finitely many $V_r(\Sigma)$-orbits under the latter action. Furthermore, the stabiliser of any element in $\Omega^n_{c,\text{dis}}$ is of the form
$$V_{k_1}(\Sigma)\times\ldots\times V_{k_n}(\Sigma)$$
for certain integers $k_1,\ldots,k_n$.
\end{lemma}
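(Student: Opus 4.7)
The plan is to prove the four assertions in sequence: define the action on $\Omega$, verify that $\Omega^n_{c,\text{dis}}$ is $V_r(\Sigma)$-stable, enumerate the orbits, and identify the stabilisers.

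First I would define the action on $\Omega$. Given $g \in V_r(\Sigma)$ and $\omega = B(\L) \in \Omega$, using Lemma \ref{properties}(iii) one enlarges $B$ to a subset of an admissible basis $A \geq X$. By Remark \ref{tecrem1} one can further refine $A$ to a basis $C$ such that both $C$ and $gC$ are admissible bases containing $X$. Let $B_C \subseteq C$ be the expansion of $B$ in $C$; by Lemma \ref{properties}(i) we have $B_C(\L)=\omega$, and $gB_C \subseteq gC \subseteq \L$, so we set $g\omega := (gB_C)(\L) \in \Omega$. Well-definedness is verified by comparing two such triples over a common upper bound, where the expansions of $B$ coincide and $g$ commutes with the $\Omega$-operations, so Lemma \ref{properties}(i) and (iv) force agreement of the resulting $(\L)$-images; the action axioms are then formal. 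For the diagonal extension to $\Omega^n_{c,\text{dis}}$, I would represent $(\omega_1,\ldots,\omega_n)$ by a partition $A = A_1 \sqcup \cdots \sqcup A_n$ of an admissible basis $A \geq X$ with $A_i(\L)=\omega_i$ (the existence of such a partition follows from Lemma \ref{properties}(ii),(iii),(iv),(vi)); after refining so that $gA$ is also a basis $\geq X$, the image partition shows via Lemma \ref{properties}(ii),(vi) that $(g\omega_1,\ldots,g\omega_n)\in\Omega^n_{c,\text{dis}}$.

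For the finiteness of orbits, I would show that the orbit of $(\omega_1,\ldots,\omega_n)$ depends only on the invariant $(\|\omega_1\|,\ldots,\|\omega_n\|)\in\{0,1,\ldots,d\}^n$. By Lemma \ref{properties}(v) this invariant is well defined and satisfies $|A_i|\equiv\|\omega_i\|\pmod d$ for any representation $A = A_1\sqcup\cdots\sqcup A_n$. Given two tuples with the same invariant, I would independently expand the pieces $A_i$ and $B_i$ (which does not affect the other pieces and preserves the $(\L)$-images, by Lemma \ref{properties}(i)) until $|A_i|=|B_i|$ for every $i$; this is possible because the semigroup of non-negative integer combinations of $\{n_j-1\}$ contains all sufficiently large multiples of $d$. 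Any bijection $A\to B$ respecting the partitions then defines an element of $V_r(\Sigma)$ identifying the tuples, and since the invariant takes only finitely many values, there are only finitely many orbits.

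For the stabiliser, fix a representation $A = A_1 \sqcup \cdots \sqcup A_n$ with $|A_i|=k_i$ and consider the sub-$\Omega$-algebra $\langle A_i\rangle \subseteq U_r(\Sigma)$. By validity, $A_i$ is a free basis of $\langle A_i\rangle$ in the variety defined by $\Sigma$, so $\langle A_i\rangle \cong U_{k_i}(\Sigma)$; moreover $\langle A_i\rangle$ depends only on $\omega_i$, since two partitions related by a common upper bound give the same sub-algebra. Any $g \in \mathrm{Stab}(\omega_1,\ldots,\omega_n)$ therefore preserves each $\langle A_i\rangle$: after passing to a basis $C\geq A$ with $gC\geq X$, working in a common upper bound of $C$ and $gC$ and applying Lemma \ref{properties}(iv) forces the expansion of $gA_i$ to coincide with that of $A_i$, hence $g\langle A_i\rangle = \langle A_i\rangle$. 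Restriction then gives a homomorphism $\rho\colon \mathrm{Stab}(\omega_1,\ldots,\omega_n) \to \prod_{i=1}^n V_{k_i}(\Sigma)$ via Theorem \ref{fullauto}. Conversely, any tuple $(g_1,\ldots,g_n) \in \prod_i V_{k_i}(\Sigma)$ glues to a unique automorphism of $U_r(\Sigma)$ stabilising every $\omega_i$ (the union of bases of the pieces forms a basis of $U_r(\Sigma)$), and $\rho$ is an isomorphism.

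The principal technical obstacle is the book-keeping in the first step: at every turn one has to compare subsets lying in different admissible bases by passing to a common upper bound supplied by \cite[Lemma~2.8]{britaconcha} and then invoking the uniqueness statements of Lemma \ref{properties}. Once this infrastructure is set up, the orbit enumeration and the product decomposition of the stabiliser follow by routine applications of the same technique, combined with Theorem \ref{fullauto} for the identification of the automorphism group of each piece.
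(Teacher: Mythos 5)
Your proposal is correct and follows essentially the same route as the paper: the action is defined via common upper bounds of admissible bases using Remark \ref{tecrem1} and Lemma \ref{properties}, the orbits are classified by the tuple $(\|\omega_1\|,\ldots,\|\omega_n\|)$, and the stabiliser is identified by restricting to the subalgebras generated by the pieces $A_i$. The only (harmless) cosmetic difference is that you route the stabiliser identification through Theorem \ref{fullauto} and an explicit gluing map, where the paper directly observes that the restriction of $g$ to each $\langle A_i\rangle$ is induced by a bijection of admissible subsets and hence lies in $V_{k_i}(\Sigma)$ by definition.
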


\begin{proof} To see that $V_r(\Sigma)$ acts on $\Omega$, it suffices to check that if $\omega=l(\L)$ for some leaf $l\in\L$, we have $g\omega\in\Omega$ for any $g\in V_r(\Sigma)$. Let $X\leq A$ be a basis with $l\in A$. By Remark \ref{tecrem1} there is some $A\leq B$ with $A\leq gB$. Note that by Lemma \ref{properties} i) $\omega$ can also be written as
$$\omega=B_1(\L)$$
where $B_1=\{l_1,\ldots,l_k\}$ is the set of leaves in $B$ obtained from $l$. Therefore $gB_1=\{gl_1,\ldots,gl_k\}\subseteq gB$ and $g\omega =gB_1(\L).$

\noindent That this action induces an action on $\Omega^n_{c,\text{dis}}$ is a consequence of the easy fact that for any $g\in V_r(\Sigma)$ and any $(\omega_1,\ldots,\omega_n)\in\Omega^n_{c,\text{dis}}$ we have $g\omega_i\wedge g\omega_j=\emptyset$ and $\L=\cup_{i=1}^ng\omega_i$.

Let $(\omega_1,\ldots,\omega_n),(\omega_1',\ldots,\omega_n')\in\Omega^n_{c,\text{dis}}$ be
such that
$\|\omega_i\|=\|\omega_i'\|$ for $1\leq i\leq n$. There are bases $X\leq A,A'$ and subsets $A_1,\ldots,A_n\subseteq A$, $A'_1,\ldots,A'_n\subseteq A'$ such that for each $1\leq i\leq n$, $\omega_i=A_i(\L)$, $\omega_i'=A_i'(\L)$ and $|A_i|=|A_i'|$. Hence we may choose a suitable element $g\in V_r(\Sigma)$ such that $gA=A'$ and $gA_i=A_i'$
for each $i=1,\ldots,n$. Then $g(\omega_1,\ldots,\omega_n)=(\omega_1',\ldots,\omega_n')$. Since the number of possible $n$-tuples of integers modulo $d$ having the same number of zeros is finite, it follows that there are only finitely many  $V_r(\Sigma)$-orbits.

Finally consider $\mathcal{W}=(\omega_1,\ldots,\omega_n)\in\Omega^n_{c,\text{dis}}$ 
as before, i.e. with $X\leq A$ and $A_1,\ldots, A_n\subseteq A$ such that $\omega_i=A_i(\L)$ for $1\leq i\leq n$.
An element $g\in V_r(\Sigma)$ fixes $\mathcal{W}$ if and 
only if $g\omega_i=\omega_i$ for each $i=1,\ldots,n$. 
We may choose a basis $B$ with $A\leq B,gB$ and then, by using Lemma \ref{properties} i) and iv), we see that $g$  fixes $\mathcal{W}$ if and only if it maps those leaves of $B,$ which are of the form $av$ for some $a\in A_i$ and some descending word $v$, to the analogous subset in $gB$. Considering the subalgebra of $U_r(\Sigma)$ generated by the $A_i$, we see that $g$ can be decomposed as $g=g_1\ldots g_n$ with $g_i\in V_{k_i}(\Sigma)$ for $k_i=|A_i|$.
\end{proof}

Let $K$ be a group and denote by $Y=K\ast K\ast\ldots$  the
infinite join of copies of $K$ viewed as a discrete $CW$-complex, i.e. $Y$ is the space
obtained by Milnor's construction for $K$. 
Then $Y$ has a $CW$-complex decomposition
whose associated chain complex yields the standard bar resolution. For detail see, for example, \cite[Section 2.4]{bensonII}.

Obviously, if a group $H$ acts on $K$ by conjugation, this action can be extended to an action of $H$ on $Y$ and to an
action of $G=K\rtimes H$ on $Y$.

\begin{lemma}\label{auxmainfinfty}
Let $H$ and $K$ be groups and let $H$ act on $K$ via $\varphi: H \to \mathrm{Aut} K$.
Assume that $H$ is of type $\F_{\infty}$, and that for every $n \in \mathbb{N}$ the induced action of
$H$ on $K^n$ has finitely many orbits and has stabilisers of type $\F_{\infty}$. Then $G = K \rtimes_{\varphi} H$ is of
type $\F_{\infty}$.
\\ \noindent The same statement holds if $\F_\infty$ is replaced with $\FP_\infty$.
\end{lemma}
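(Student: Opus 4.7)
The plan is to let $G = K \rtimes_\varphi H$ act cellularly on the contractible $CW$-complex $Y = K \ast K \ast \cdots$ described immediately before the statement, and to invoke the standard Brown-type criterion \cite{brown2}: if a group acts on a contractible $G$-$CW$-complex with finitely many $G$-orbits in each dimension and with cell stabilisers of type $\F_\infty$ (respectively $\FP_\infty$), then the group is itself of type $\F_\infty$ (respectively $\FP_\infty$).

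First I would unpack the cell structure of $Y$. The $n$-cells are parametrised by $(n+1)$-tuples $(k_0, \ldots, k_n) \in K^{n+1}$, with $K$ acting freely by diagonal left multiplication and $H$ acting diagonally through $\varphi$. The $K$-orbits of $n$-cells are identified with $K^n$ via the bar coordinates $[k_1|\cdots|k_n]$, corresponding to the $K$-orbit of $(1, k_1, k_1 k_2, \ldots, k_1 \cdots k_n)$. A direct computation, using that each $\varphi(h)$ is a group automorphism of $K$, shows that under this identification the induced $H$-action on $K$-orbits is exactly the diagonal action of $H$ on $K^n$ via $\varphi$. Consequently the set of $G$-orbits of $n$-cells is in bijection with the set of $H$-orbits on $K^n$, which is finite by hypothesis.

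Next I would compute the $G$-stabiliser of a cell $[k_1|\cdots|k_n]$. Because $K$ acts freely on $Y$, inspection of the first vertex of the simplex forces $k = 1$ for any stabilising $(k, h) \in G$; the remaining vertex conditions then force $\varphi(h)(k_i) = k_i$ for all $i = 1, \ldots, n$. Hence the $G$-stabiliser is isomorphic to the $H$-stabiliser of $(k_1, \ldots, k_n) \in K^n$, which is of type $\F_\infty$ (respectively $\FP_\infty$) by hypothesis.

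With these two ingredients in place, Brown's criterion applies and yields that $G$ is of type $\F_\infty$; the $\FP_\infty$ statement follows by the $\FP_\infty$ version of the same criterion. The only delicate step is the identification of the cell structure of $Y$ as a $G$-$CW$-complex and the verification that the $K$-coordinate of any cell stabiliser is forced to be trivial (which is what lets us import the hypothesis on $H$-stabilisers); everything else is routine bookkeeping plus the invocation of the standard finiteness criterion.
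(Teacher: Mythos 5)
Your argument is essentially the paper's own proof: both let $G$ act diagonally on the Milnor-type construction on $K$, use freeness of the $K$-action to identify cell stabilisers with $H$-stabilisers of tuples in $K^n$, import finiteness of orbits from the hypothesis, and finish with Brown's criterion from \cite{brown2}. One bookkeeping point needs repair: in the join model $Y=K\ast K\ast\cdots$ an $(n-1)$-simplex records not only $n$ elements of $K$ but also \emph{which} $n$ join factors they lie in, and since the $G$-action preserves the individual join factors there are infinitely many $G$-orbits of cells in each positive dimension, so ``finitely many orbits in each dimension'' fails for $Y$ itself. Your parametrisation of $n$-cells by bar coordinates $[k_1|\cdots|k_n]$ is really the cell structure of the realisation of the simplicial set $EK$, where that claim is true (after discarding degenerate simplices); the paper instead sidesteps the issue by filtering $Y$ by the cocompact, $(n-2)$-connected subjoins $Y_n=K^{\ast n}$ and invoking the filtered form of Brown's criterion, \cite[Corollary 3.3(a)]{brown2}. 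Either repair is a one-line fix (your version amounts to filtering by skeleta), and the identification of stabilisers and the orbit count, which are the substantive points, are correct.
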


\begin{proof}
Let $Y_n = K^{\ast n}$ and let $Y$ be as above.
Consider the action of $G$ on $Y$ induced by the diagonal action. 
Note that this preserves the individual join factors.
Since the action of $K$ on $Y$ is free, he stabiliser of a cell in $G$ is isomorphic to its
stabiliser in $H$. The stabiliser of an $(n-1)$-simplex is the stabiliser of $n$  elements of $K$, thus $\F_{\infty}$ by assumption. Maximal simplices in $Y_n$
correspond to elements of $K^n$ and every simplex of $Y_n$ is contained in a maximal simplex. This, together with the fact that the action of $G$ on $K^n$ has only finitely many orbits, implies that the action of $G$ on $Y_n$ is
cocompact. Finally, the connectivity of the filtration $\{Y_n \}_{n \in 
\mathbb{N}}$ tends to infinity as $n \to \infty.$
Hence the claim follows from \cite[Corollary 3.3(a)]{brown2}.
\end{proof}

\begin{theorem}\label{mainfinfty} Assume that for any $t>0$, the group $V_t(\Sigma)$ is of type $\F_\infty.$ Then the
groups  $G_i= K_i\rtimes V_{r_i}(\Sigma)$ of Theorem \ref{centraliser} are of type $\F_\infty.$ 

The same statement holds if $\F_\infty$ is replaced with $\FP_\infty$.
\end{theorem}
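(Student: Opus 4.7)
My plan is to apply Lemma~\ref{auxmainfinfty} with $H = V_{r_i}(\Sigma)$, which is of type $\F_\infty$ by hypothesis, and $K = K_i = \colim(U_{r_i}(\Sigma), L_i)$. This reduces the theorem to verifying, for each $n \geq 1$, that the $V_{r_i}(\Sigma)$-action on $K_i^n$ has finitely many orbits and stabilisers of type $\F_\infty$. The key idea is to encode a tuple $(x_1, \ldots, x_n) \in K_i^n$ as a labelled partition of $\L$ into pieces of $\Omega$, and then appeal to Lemma~\ref{tec1}.

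Concretely, I would set $m = |L_i|^n$, index the coordinates of $\Omega^m$ by the vectors $v \in L_i^n$, and for $(x_1, \ldots, x_n) \in K_i^n$ choose a common basis $A \geq X$ on which every $x_j$ is represented as a map $A \to L_i$ (this exists because $K_i$ is a colimit along expansions, using the argument behind Remark~\ref{tecrem1}), put $A_v = \{a \in A : (x_1(a), \ldots, x_n(a)) = v\}$, and define $\Phi(x_1, \ldots, x_n) = (A_v(\L))_{v \in L_i^n}$. Lemma~\ref{properties}(i) should show $\Phi$ is independent of $A$ and lies in $\Omega^m_{c,\text{dis}}$, and Lemma~\ref{properties}(iv) then gives that $\Phi$ is injective. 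Equivariance will follow from the formula $(gx_j)(gl) = x_j(l)$ used in the proof of Theorem~\ref{centraliser}: it shows that the label vector at $gl$ equals the one at $l$, so $\Phi$ is $V_{r_i}(\Sigma)$-equivariant with respect to the componentwise action of Lemma~\ref{tec1}.

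Lemma~\ref{tec1} will then supply both required properties at once: it bounds the number of orbits on $\Omega^m_{c,\text{dis}}$ and describes each stabiliser as a finite direct product $V_{k_1}(\Sigma) \times \cdots \times V_{k_m}(\Sigma)$, which is of type $\F_\infty$ by hypothesis since the class is closed under finite direct products. Injectivity and equivariance of $\Phi$ transport the orbit bound to $K_i^n$ and identify each stabiliser of an $n$-tuple with such a product. Lemma~\ref{auxmainfinfty} then applies and yields $G_i$ of type $\F_\infty$, while the $\FP_\infty$ version goes through verbatim using the $\FP_\infty$ half of Lemma~\ref{auxmainfinfty} and the corresponding closure under finite products. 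I expect the main technical step to be the setup of $\Phi$ and the careful verification of its equivariance and injectivity; once this is in place the finiteness properties transfer mechanically from $V_{r_i}(\Sigma)$ through Lemma~\ref{tec1}.
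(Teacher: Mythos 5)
Your proposal is correct and follows essentially the same route as the paper: both construct an injective, equivariant map of $V_{r_i}(\Sigma)$-sets from $K_i^n$ into some $\Omega^{\overline n}_{c,\text{dis}}$ and then conclude via Lemma \ref{tec1} together with Lemma \ref{auxmainfinfty}. The only difference is cosmetic --- the paper factors the map through $\Omega^{n|L_i|}_c$ and then refines by intersections over subsets (landing in $\Omega^{2^{n|L_i|}-1}_{c,\text{dis}}$), whereas you partition a common basis directly by the full label vector in $L_i^n$, a slightly cleaner packaging of the same common-refinement idea.
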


\begin{proof}  Put $V:=V_r(\Sigma)$, $K:=K_i$ and $G:=G_i$.
We claim that for every $n$ there is some $\overline n$ big enough such that there is an
injective map of $V$-sets
$$\phi_{n}:K^{n}\to\Omega^{\overline n}_{c,\text{dis}}.$$
 Let $x\in K$ be given by a map $x: A \to L$, where $A$ is a basis with $X \leq A.$ The element $x$ is determined uniquely by a map which, by slightly abusing notation,  we also denote 
$x:L\to\Omega$. This $x$ maps any $s\in L$ to $\omega_s:=A_s(\L)$ with $A_s=\{a\in A\mid x(a)=s\}.$ Obviously $\cup_{s\in
L}\omega_s=\L$. This means that fixing an order in $L$ yields an injective
map of $V$-sets
$$\xi_n:K^n\to\Omega^{n|L|}_c.$$
Consider any $(\omega_1,\ldots,\omega_m)\in\Omega^m_c$ for $m=n|L|$. Let $X\leq A$ with $A_1\ldots, A_m\subseteq A$ and $\omega_i=A_i(\L)$ for $1\leq i\leq m$. Let  $\overline
n:=2^m-1$, i.e. the number of non-empty subsets $\emptyset\neq
S\subseteq\{1,\ldots,m\}$. For any such $S$ let
$$A_S:=\bigcap_{i\in S}A_i\setminus\cup\{\bigcap_{j\in T}A_j\mid {S\subset
T\subseteq\{1,\ldots,m\}}\}.$$
Then one easily checks that the $A_S$ are pairwise disjoint and that their union is $\L$. Let $\omega_S:=A_S(\L)$. The preceding paragraph means that fixing an ordering on the set of non-empty
subsets of $\{1,\ldots,m\}$ yields an injective map of $V$-sets
$$\rho_m:\Omega^m_c\to\Omega^{\overline n}_{c,\text{dis}}.$$
Composing $\xi_n$ and $\rho_{m}$ we get the desired $\phi_n$.

Now, applying Lemma \ref{tec1} we deduce that $K^n$ has only finitely many orbits under
the action of $V_r(\Sigma)$ and that every cell stabiliser is isomorphic to a direct
product of copies of $V_{t}(\Sigma)$ for suitable indices $t$. It now suffices to use Lemma \ref{auxmainfinfty}.
\end{proof}

\goodbreak

This implies that \cite[Conjecture 7.5]{britaconcha} holds.

\begin{corollary}\hfill
\begin{enumerate}
\item $V_r(\Sigma)$ is quasi-$\underline{\FP}_\infty$ if and only if $V_k(\Sigma)$ is of
type ${\FP}_\infty$ for any $k.$
\item $V_r(\Sigma)$ is quasi-$\underline{\FFF}_\infty$ if and only if $V_k(\Sigma)$ is of
type ${\FFF}_\infty$ for any $k.$
\end{enumerate}
\end{corollary}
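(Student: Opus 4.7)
The plan is to invoke the characterisation of quasi-$\underline{\FP}_\infty$ (and quasi-$\underline{\FFF}_\infty$) developed in \cite{britaconcha}, according to which $V_r(\Sigma)$ enjoys this property precisely when, for every finite subgroup $Q \leq V_r(\Sigma)$, the centraliser $C_{V_r(\Sigma)}(Q)$ is of type $\FP_\infty$ (respectively $\FFF_\infty$). With this in hand, the corollary will follow from a two-way translation between the centraliser condition and the finiteness of each $V_k(\Sigma)$ via Theorems \ref{centraliser} and \ref{mainfinfty}.

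For the implication ``$V_k(\Sigma)$ is of type $\FP_\infty$ for every $k$'' $\Rightarrow$ ``$V_r(\Sigma)$ is quasi-$\underline{\FP}_\infty$'', I would first use Theorem \ref{centraliser} to write each centraliser as a finite direct product $\prod_{i=1}^t G_i$ of the semidirect products $G_i = K_i \rtimes V_{r_i}(\Sigma)$, and then invoke Theorem \ref{mainfinfty} to conclude that each $G_i$ is of type $\FP_\infty$. Since a finite direct product of $\FP_\infty$ groups is again $\FP_\infty$, every centraliser of a finite subgroup is $\FP_\infty$, which is the desired quasi-property.

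For the converse, assume that $V_r(\Sigma)$ is quasi-$\underline{\FP}_\infty$, so that every centraliser of a finite subgroup is $\FP_\infty$. Taking $Q = 1$ immediately yields that $V_r(\Sigma)$ itself is $\FP_\infty$. For arbitrary $k$, Remark \ref{adm-basis} reduces the statement to representatives $k \in \{1, \ldots, d\}$, and I would realise $V_k(\Sigma)$ as a direct factor of some centraliser as follows: in the decomposition of Theorem \ref{centraliser}, the summand $G_1$ corresponding to the trivial permutation representation $\varphi_1$ of $Q$ has $L_1 = 1$, hence $K_1 = 1$, so $G_1 = V_{r_1}(\Sigma)$. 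It then suffices to construct, for each prescribed $k$, a finite subgroup $Q \leq V_r(\Sigma)$ whose minimal $Q$-invariant basis has exactly $k$ fixed points. Since a direct factor (a retract) of an $\FP_\infty$ group is itself $\FP_\infty$, this will show $V_k(\Sigma)$ is $\FP_\infty$.

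Statement (ii) will be handled identically with $\FFF_\infty$ substituted for $\FP_\infty$ throughout, since the $\FFF_\infty$ versions of Lemma \ref{auxmainfinfty} and Theorem \ref{mainfinfty} are already available. The main delicate point in the plan is the existence of $Q$ with the prescribed fixed-point count in the converse direction: one must verify that every residue class modulo $d$ can be realised in this way on a suitable admissible basis. This should be a routine bookkeeping step using the flexibility of expansions (and Remark \ref{adm-basis}) in a valid and bounded Cantor algebra, for example by taking $Q$ to act by a fixed-point-free permutation on the non-fixed part of a sufficiently large admissible basis and adjusting its size by expansions so that the required congruence holds.
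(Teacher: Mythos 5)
Your proposal is correct and follows essentially the same route as the paper: both directions reduce, via the characterisation of quasi-$\underline{\FP}_\infty$ (resp.\ quasi-$\underline{\FFF}_\infty$) from \cite{britaconcha}, to the statement that centralisers of finite subgroups are of type $\FP_\infty$ (resp.\ $\FFF_\infty$), with the ``if'' part then being Theorem \ref{mainfinfty}. The only difference is that for the ``only if'' part the paper simply cites \cite[Remark 7.6]{britaconcha}, whereas you reconstruct it by exhibiting $V_k(\Sigma)$ as the direct factor $G_1=V_{r_1}(\Sigma)$ (with $L_1=1$, hence $K_1=1$) of a centraliser with $r_1\equiv k$ mod $d$, $r_1\neq 0$; your flagged bookkeeping step does go through, e.g.\ by letting a cyclic group of prime order coprime to $d$ act with exactly $k$ fixed points on a sufficiently large admissible basis and invoking \cite[Proposition 4.2]{britaconcha}.
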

\begin{proof}  The \lq\lq only if" part of both items is proven in \cite[Remark 7.6]{britaconcha}.
 The \lq\lq if" part is a consequence of 
 \cite[Definition 6.3, Proposition 6.10]{britaconcha} and Theorem \ref{mainfinfty} above.
 \end{proof}
 
\noindent  Theorem \ref{mainfinfty} also implies that the Brin-like groups of Section \ref{fpinfty} are of type quasi-$\F_\infty:$

\begin{corollary}
Suppose $U_r(\Sigma)$ is valid, bounded and complete. Then $V_r(\Sigma)$ is of type quasi-$\F_\infty$. 

In particular, centralisers of finite groups are of type $\F_\infty$.
\end{corollary}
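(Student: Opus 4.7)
The plan is to bootstrap from the three main technical results already established in the excerpt: Theorem \ref{FPinfty}, Theorem \ref{mainfinfty}, and the preceding corollary relating quasi-$\underline{\FFF}_\infty$ to the ordinary finiteness of the groups $V_k(\Sigma)$.

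First I would observe that being valid, bounded and complete is a property of the defining data $\Sigma$ (the arities, the set of colours and the identities $\Sigma_2$), not of the cardinality $r$ of the free basis. Hence, for every positive integer $k$, the Cantor-algebra $U_k(\Sigma)$ is again valid, bounded and complete, so Theorem \ref{FPinfty} applies uniformly and tells us that $V_k(\Sigma)$ is of type $\F_\infty$ for every $k$.

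Next I would feed this into the preceding corollary: since $V_k(\Sigma)$ is of type $\FFF_\infty$ for all $k$, the \lq\lq if" direction of part (ii) of that corollary gives immediately that $V_r(\Sigma)$ is of type quasi-$\underline{\FFF}_\infty$, which is what is meant by quasi-$\F_\infty$.

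Finally, for the \lq\lq in particular" clause, I would apply Theorem \ref{centraliser} to write the centraliser $C_{V_r(\Sigma)}(Q)$ of any finite subgroup $Q$ as the finite direct product $\prod_{i=1}^t G_i$ with $G_i = K_i \rtimes V_{r_i}(\Sigma)$. Theorem \ref{mainfinfty}, combined with the $\F_\infty$-property of all $V_t(\Sigma)$ established in the first step, yields that each $G_i$ is of type $\F_\infty$; since $\F_\infty$ is preserved under finite direct products, the centraliser itself is of type $\F_\infty$. There is no real obstacle here because the heavy lifting has already been done in Theorems \ref{FPinfty} and \ref{mainfinfty}; the only thing to be slightly careful about is the remark that the hypothesis of Theorem \ref{mainfinfty} requires $\F_\infty$-ness of $V_t(\Sigma)$ \emph{for every} $t$, not just for $t=r$, which is why the uniformity observation in the first paragraph is essential.
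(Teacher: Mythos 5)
Your proposal is correct and follows exactly the route the paper intends: Theorem \ref{FPinfty} applies to $V_k(\Sigma)$ for every $k$ since validity, boundedness and completeness are properties of $\Sigma$ alone, and then the preceding corollary gives quasi-$\F_\infty$ while Theorems \ref{centraliser} and \ref{mainfinfty} handle the centraliser statement. The paper gives no separate proof precisely because this is the intended chain of deductions, and your remark about needing $\F_\infty$ for \emph{all} $t$ is the right point to flag.
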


\section{normalisers of finite subgroups}\label{normaliser}

\noindent Let $Y$ be any basis. We denote $$S(Y):=\{g\in V_r(\Sigma)\mid gY=Y\}.$$
Observe that this is a finite group,  isomorphic to the symmetric group of degree $|Y|$.

\begin{theorem}\label{normaliser1}
Let $Q\leq V_r(\Sigma)$ be a finite subgroup. Let $Y,$ $t$, $r_i$, $l_i$, $\varphi_i$, and $1\leq i\leq t$ be as in the proof of Theorem \ref{centraliser}. Then
$$N_{V_r(\Sigma)}(Q)=C_{V_r(\Sigma)}(Q)N_{S(Y)}(Q)$$
and $N_{V_r(\Sigma)}(Q)/C_{V_r(\Sigma)}(Q)\cong N_{S(Y)}(Q)/C_{S(Y)}(Q).$
\end{theorem}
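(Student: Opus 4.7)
The plan is to prove the first equality by showing that every $g\in N_{V_r(\Sigma)}(Q)$ can be written as $g=cs$ with $c\in C_{V_r(\Sigma)}(Q)$ and $s\in N_{S(Y)}(Q)$; the quotient statement will then follow from a one-line application of the first isomorphism theorem. The first step is to observe that $gY$ is again a $Q$-invariant admissible basis of the same (hence minimal) cardinality as $Y$: admissibility is preserved because $g\in V_r(\Sigma)$ sends admissible sets to admissible sets, and $Q$-invariance follows from $g^{-1}Qg=Q$, since for $q\in Q$ and $y\in Y$ we have $q(gy)=g(g^{-1}qg)y\in gY$.

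The next step is to build, inside $C_{V_r(\Sigma)}(Q)$, an element $c$ taking $Y$ to $gY$. Since $Y$ and $gY$ are both minimal $Q$-invariant bases, they decompose into the same number $r_i$ of $Q$-orbits of each type $\varphi_i$, and so one can pair up orbits of matching type and, within each matched pair, fix base points to define a unique $Q$-equivariant bijection. Concatenating these yields a $Q$-equivariant bijection $\psi:Y\to gY$. By Definition \ref{groups}, $\psi$ extends to an element $c\in V_r(\Sigma)$. The $Q$-equivariance of $\psi$ means that for each $q\in Q$ the $\Omega$-algebra morphisms $cq$ and $qc$ coincide on the generating set $Y$, and therefore coincide on all of $U_r(\Sigma)$, so that $c\in C_{V_r(\Sigma)}(Q)$. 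Since $c^{-1}gY=Y$ and $c^{-1}g$ is still in $N_{V_r(\Sigma)}(Q)$ (being the product of two normalising elements), we obtain $c^{-1}g\in S(Y)\cap N_{V_r(\Sigma)}(Q)=N_{S(Y)}(Q)$, which settles the first equality.

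For the isomorphism, the composition $N_{S(Y)}(Q)\hookrightarrow N_{V_r(\Sigma)}(Q)\twoheadrightarrow N_{V_r(\Sigma)}(Q)/C_{V_r(\Sigma)}(Q)$ is surjective exactly by the decomposition just established, while its kernel is $N_{S(Y)}(Q)\cap C_{V_r(\Sigma)}(Q)=S(Y)\cap C_{V_r(\Sigma)}(Q)=C_{S(Y)}(Q)$, since any centralising element automatically normalises $Q$. The delicate ingredient is the existence of $\psi$: one must check that although conjugation by $g$ may permute the equivalence classes of transitive permutation representations appearing in the orbit decomposition of $gY$ relative to $Y$, the multiplicities $r_i$ on the two sides must nevertheless agree; this is forced by the fact that both $Y$ and $gY$ are minimal $Q$-invariant bases, so the $r_i$ are determined intrinsically by $Q$ and $U_r(\Sigma)$ (as reflected in Theorem \ref{centraliser}).
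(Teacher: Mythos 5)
Your overall strategy coincides with the paper's: show that $gY$ is again a $Q$-invariant basis of the same (minimal) cardinality, construct a $Q$-equivariant bijection $\psi\colon Y\to gY$, observe that the induced automorphism $c$ lies in $C_{V_r(\Sigma)}(Q)$, and conclude that $c^{-1}g\in S(Y)\cap N_{V_r(\Sigma)}(Q)=N_{S(Y)}(Q)$; the quotient statement then follows from the second isomorphism theorem together with $N_{S(Y)}(Q)\cap C_{V_r(\Sigma)}(Q)=C_{S(Y)}(Q)$. All of that is correct and is exactly what the paper does.

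The gap sits precisely at the point you yourself flag as the ``delicate ingredient'' and then dispose of in one sentence: the claim that $Y$ and $gY$ contain the same number $r_i$ of $Q$-orbits of each type $\varphi_i$. Appealing to Theorem \ref{centraliser} here is circular: that theorem is proved relative to one chosen minimal basis $Y$, and nowhere does it assert that every minimal $Q$-invariant basis has the same orbit-type multiplicities. What is actually available a priori, from \cite[Proposition 4.2]{britaconcha}, is only that $r_i\equiv r_i'\pmod d$ and that $r_i=0$ if and only if $r_i'=0$. Worse, the bijection $g\colon Y\to gY$ does not send type-$\varphi_i$ orbits to type-$\varphi_i$ orbits but to orbits of the conjugated type $\varphi_i^g=\varphi_{g(i)}$, so the naive orbit count yields only $r_i=r'_{g(i)}$, not $r_i=r_i'$. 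The paper closes this by chasing the permutation $i\mapsto g(i)$ around a cycle: iterating $r_{g^j(i)}=r'_{g^{j+1}(i)}\equiv r_{g^{j+1}(i)}\pmod d$ until $g^k(i)=i$ and using the bounds $0<r_j\le d$ forces all these numbers to coincide, whence $r_i'=r_i$. Alternatively, your ``intrinsic determination'' claim could be made honest by proving that minimality of $|gY|$ forces every nonzero $r_i'$ into the range $(0,d]$ (if $r_i'>d$ one could contract $d$ whole $Q$-orbits of type $\varphi_i$ $Q$-equivariantly and obtain a strictly smaller $Q$-invariant basis), after which the congruence mod $d$ gives equality; but one of these arguments must actually be supplied, and as written your proof supplies neither.
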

\begin{proof} Let $g\in N_{V_r(\Sigma)}(Q)$ and $Y_1=gY$. Then for any $q\in Q,$ $qY_1=qgY=gq^gY=gY=Y_1$. Therefore $Y_1$ is also fixed setwise  by $Q$.  Let $r_i'$ denote the number of components of type $\varphi_i$ in $Y_1$. Then, by \cite[Proposition 4.2]{britaconcha} $r_i\equiv r_i'$ mod $d,$ and  $r_i=0$ if and only if $r_i'=0$. 

 We claim that $Y$ and $Y_1$ are isomorphic as $Q$-sets, in other words, that $r_i=r_i'$ for every $1\leq i\leq t$. Note that since $g$ normalises $Q$, it acts on the set of $Q$-permutation representations $\{\varphi_1,\ldots,\varphi_t\}$, via $\varphi_i^g(x):=\varphi_i(x^{g^{-1}})$.
Let $i$ with $r_i\neq 0$ and let $g(i)$ be the index such that $\varphi_i^g=\varphi_{g(i)}$. The fact that $g:Y\to Y_1$ is a bijection implies that $r_i=r_{g(i)}'$. We may do the same for $g(i)$ and get an index $g^2(i)$ with $r_{g(i)}=r_{g^2(i)}'.$ At some point, since the orbits of $g$ acting on the sets of permutation representations are finite, we get $g^k(i)=i$ and $r_{g^{k-1}(i)}=r_i'.$ As $r_i'\equiv r_i$ mod $d$ we have 
$r_{g^{k-1}(i)}\equiv r_i$ mod $d,$ and since $0<r_i,r_{g^{k-1}(i)}\leq d$ we deduce that $r_i'=r_{g^{k-1}(i)}=r_i$ as claimed.

Now, we can choose an $s\in V_r(\Sigma)$ mapping $Y_1$ to $Y$
and such that $s:Y_1\to Y$ is a $Q$-map, i.e., commutes with the $Q$-action. Therefore, $s\in C_{V_r(\Sigma)}(Q)$ and $sgY=Y$ thus $sg\in N_{S(Y)}(Q).$
\end{proof}

\begin{remark} We can give a more detailed description of the conjugacy action of $N_{S(Y)}(Q)$ on the group $C_{V_r(\Sigma)}(Q)$. Recall that, by Theorem \ref{centraliser} this last group is a direct product of groups $G_1,\ldots,G_t$. We use the same notation as in Theorem \ref{centraliser}. Let $g\in N_{S(Y)}(Q)$ and put $\varphi_{g(i)}=\varphi_i^g$ as before.  Denote by $Z_{g(i)},Z_i\subseteq Y$ the subsets of $Y$ which are unions of $Q$-orbits of types $\varphi_{g(i)}$ and $\varphi_i$ respectively. Then one easily checks that $gZ_{g(i)}=Z_i$ and $G_{g(i)}=G_i^g$. Moreover, recall that $G_{i}=K_i\rtimes V_{r_i}(\Sigma)$ with $K_i=\colim(U_{r_i}(\Sigma),L_i)$ and $L_i=C_{S_{l_i}}(\varphi_i(Q))$. Then $r_{g(i)}=r_i$ and $g$ maps the subgroup $V_{r_i}(\Sigma)$ of $G_i$ to the same subgroup of $G_{g(i)}$ and $K_i$ to $K_{g(i)}$. We also notice
that $g$ acts diagonally on the system $(U_{r_i}(\Sigma),L_i)$ mapping it to $(U_{r_{g(i)}}(\Sigma),L_{g(i)})$ In particular, the action of $g$ on $L_i$ is the restriction of its action on $C_{S(V)}(Q)$ and this action yields taking to the colimit the conjugation action $K_i^g=K_{g(i)}$.
\end{remark}

\begin{remark} Using  \cite[Theorem 5]{zassenhaus}, one can also give a more detailed description of the groups $L_i$ above:
$$L_i=N_{\varphi_i(Q)}(\varphi_i(Q)_1)/\varphi_i(Q)_1$$
where $\varphi_i(Q)_1$ is the stabiliser of one letter in $\varphi_i(Q)$. Of course, if $Q$ is cyclic, then so is $\varphi_i(Q)$ and we get $\varphi_i(Q)_1=1$ and $L_i=\varphi_i(Q)$.
\end{remark}

\end{document}